\newtheorem{theorem}{Theorem}[section]
\theoremstyle{plain}
\newtheorem{conjecture}{Conjecture}[section]
\newtheorem{corollary}{Corollary}[section]
\newtheorem{definition}{Definition}[section]
\newtheorem{lemma}{Lemma}[section]
\newtheorem{proposition}{Proposition}[section]
\newtheorem{remark}{Remark}[section]
\numberwithin{equation}{section}
\theoremstyle{definition}
\theoremstyle{remark}
\numberwithin{equation}{section}
\begin{document}
\title[The CR $Q$-curvature flow]{Global existence and convergence for the
CR $Q$-curvature flow in a closed strictly pseudoconvex CR $3$-manifold}
\author{$^{\ast }$Shu-Cheng Chang$^{1}$}
\address{$^{1}$Department of Mathematics, National Taiwan University, Taipei
10617, Taiwan }
\email{scchang@math.ntu.edu.tw }
\thanks{$^{\ast }$Research supported in part by the MOST of Taiwan}
\author{$^{\ast }$Ting-Jung Kuo$^{2}$}
\address{$^{2}$Department of Mathematics, National Taiwan Normal University,
Taipei 11677, Taiwan }
\email{tjkuo1215@ntnu.edu.tw}
\author{Takanari Saotome$^{3}$}
\address{$^{3}$National Center for Theoretical Sciences, National Taiwan
University, Taipei 10617, Taiwan, R.O.C.}
\email{tksaotome@gmail.com}
\subjclass[2000]{Primary 32V20; Secondary 53C44}
\keywords{ Subelliptic, Embedable, Bochner Formulae , CR $Q$-curvature flow,
CR Paneitz operator, First Chern Class}

\begin{abstract}
In this note, we affirm the partial answer to the long open Conjecture which
states that any closed embeddable strictly pseudoconvex CR $3$-manifold
admits a contact form $\theta $ with the vanishing CR $Q$-curvature. More
precisely, we deform the contact form according to an CR analogue of $Q$%
-curvature flow in a closed strictly pseudoconvex CR $3$-manifold $(M,\ J,\
[\theta _{0}])$ of the vanishing first Chern class $c_{1}(T_{1,0}M)$.
Suppose that $M$ is embeddable and the CR Paneitz operator $P_{0}$ is
nonnegative with kernel consisting of the CR pluriharmonic functions. We
show that the solution of CR $Q$-curvature flow exists for all time and has
smoothly asymptotic convergence on $M\times \lbrack 0,\infty ).$\ As a
consequence, we are able to affirm the Conjecture in a closed strictly
pseudoconvex CR $3$-manifold of the vanishing first Chern class and
vanishing torsion.
\end{abstract}

\maketitle

\section{\textbf{Introduction}}

A strictly pseudoconvex CR $(2n+1)$-manifold is called pseudo-Einstein if
the pseudohermitian Ricci curvature tensor is function-proportional to its
Levi metric for $n\geq 2$ (This is trivial for $n=1$). In general, this is
equivalent to saying that the following quantity is vanishing (\cite{l1},
\cite{h})
\begin{equation}
W_{\alpha }\doteqdot \left( R,_{\alpha }-inA_{\alpha \beta },^{\beta
}\right) =0\text{.}  \label{2019x}
\end{equation}%
Then the pseudo-Einstein condition can be replaced by (\ref{2019x}) for all $%
n\geq 1.$ From this, one can define (\cite{h}, \cite{fh}, \cite{ccc}) the CR
$Q$-curvature for $n=1$ by

\begin{equation}
Q:=-\frac{4}{3}\mathrm{Re}[\left( R,_{1}-iA_{11,\overline{1}}\right) _{%
\overline{1}}]=-\frac{2}{3}(W_{1\overline{1}}+W_{\overline{1}1}).  \label{c}
\end{equation}

J. Lee (\cite{l1}) showed an obstruction to the existence of a
pseudo-Einstein contact form $\theta ,$ which is the vanishing of first
Chern class $c_{1}(T_{1,0}M)$ for a closed strictly pseudoconvex $(2n+1)$%
-manifold $\left( M,J,\theta \right) $. Moreover, there is an invariant
contact form (\cite{fh}) if and only if it is pseudo-Einstein (\cite{l1})
for all $n\geq 1.$

\begin{conjecture}
\label{conj1} (\cite{l1}) Any closed strictly pseudoconvex CR $(2n+1)$%
-manifold of the vanishing first Chern class $c_{1}(T_{1,0}M)$ admits a
global pseudo-Einstein structure.
\end{conjecture}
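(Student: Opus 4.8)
The plan is to produce the desired pseudo-Einstein contact form -- in the case the paper settles, namely $n=1$, $c_1(T_{1,0}M)=0$ and vanishing torsion, the Conjecture for general $n$ remaining open -- as the long-time limit of the CR $Q$-curvature flow in the conformal class $[\theta_0]$, and then to read off the pseudo-Einstein condition from the vanishing of the limiting $Q$-curvature. Since $n=1$ the total $Q$-curvature $\int_M Q\,dV_\theta$ is zero for every $\theta$, so one runs the unnormalized flow $\partial_t\theta(t)=-2Q(t)\,\theta(t)$; writing $\theta(t)=e^{2u(t)}\theta_0$ this becomes the fourth-order subelliptic quasilinear parabolic equation $\partial_t u=-Q(t)=-e^{-4u}(Q_0+P_0u)$ with $u(0)=0$, where $P_0$ is the CR Paneitz operator of $\theta_0$ (normalized so that $e^{4u}Q(t)=Q_0+P_0u$) and $Q_0$ is the $Q$-curvature of $\theta_0$. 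I would first record the structural facts: $\mathcal E(u)=\tfrac12\langle u,P_0u\rangle_0+\int_M Q_0u\,dV_0$ (the $L^2$-pairing taken with respect to $\theta_0$) is monotone non-increasing along the flow, with $\tfrac{d}{dt}\mathcal E(u(t))=-\int_M Q(t)^2\,dV_{\theta(t)}\le0$; the volume $\int_M dV_{\theta(t)}$ is conserved, since $1\in\ker P_0$ and $\int_M Q_0\,dV_0=0$; and $\mathcal E$ is bounded below. The last point is where the hypotheses enter: since $P_0\ge0$ with $\ker P_0$ equal to the space $\mathcal P$ of CR pluriharmonic functions, since $Q_0$ is $L^2$-orthogonal to $\mathcal P$, and since on an embeddable closed CR $3$-manifold $P_0$ has closed range with a coercivity estimate $\langle u,P_0u\rangle_0\gtrsim\|u-\Pi u\|_{S^{2,2}}^2$ in the Folland--Stein Sobolev space $S^{2,2}$ ($\Pi$ the $L^2$-projection onto $\mathcal P$), one gets $\mathcal E(u)\ge-C$. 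Hence $\mathcal E(u(t))$ stays in a fixed compact interval and $\int_0^\infty\int_M Q(t)^2\,dV_{\theta(t)}\,dt<\infty$.

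For short-time existence: the linearization of the flow at $u$ is a positive multiple of the operator $e^{-4u}P_0$ acting on the increment, a fourth-order nonnegative subelliptic operator, so the subelliptic parabolic theory ($L^p$ and Hölder-type estimates in Folland--Stein spaces for $\partial_t+e^{-4u}P_0$) yields a smooth solution on a maximal interval $[0,T)$; embeddability again supplies the hypoellipticity and closed-range properties of $P_0$ used here.

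The a priori estimates on $[0,T)$ are the heart of the proof. The energy bound controls $\langle u,P_0u\rangle_0$ and hence, by coercivity, the $S^{2,2}$-norm of $u-\Pi u$. The step I expect to be the \emph{main obstacle} is a uniform $C^0$ bound on $u$ itself -- equivalently, control of the CR-pluriharmonic part $\Pi u$ of the conformal factor, on which the energy gives no information -- which I would obtain by Moser iteration on the evolution equation, feeding in the conserved volume, the energy bound, and a CR Adams--Moser--Trudinger inequality (again using $P_0\ge0$ and $\ker P_0=\mathcal P$). Once $\sup_M|u(\cdot,t)|$ is bounded uniformly in $t$ the equation is uniformly subelliptic-parabolic, and a bootstrap with subelliptic Schauder and $L^p$ estimates gives uniform-in-$t$ $C^\infty$ bounds on $u(\cdot,t)$; in particular $T=\infty$.

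With uniform smooth bounds, every sequence $t_k\to\infty$ subconverges in $C^\infty$ to some $u_\infty$, and $\int_0^\infty\int_M Q(t)^2\,dV_{\theta(t)}\,dt<\infty$ forces $Q(t_k)\to0$, so $Q_0+P_0u_\infty=0$: the limit $\theta_\infty=e^{2u_\infty}\theta_0$ satisfies $Q_{\theta_\infty}\equiv0$. To upgrade subconvergence to genuine smooth asymptotic convergence on $M\times[0,\infty)$ I would invoke a Lojasiewicz--Simon gradient inequality for the real-analytic functional $\mathcal E$ near its critical set $\{u:Q_0+P_0u=0\}$, which is an affine translate of $\mathcal P$ and, since $\ker P_0=\mathcal P$ with $P_0$ nondegenerate in the transverse directions, a clean analytic submanifold -- precisely the setting in which Lojasiewicz--Simon applies. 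Finally, for the corollary: a torsion-free $\theta_0$ on a closed CR $3$-manifold with $c_1(T_{1,0}M)=0$ satisfies the standing hypotheses -- the CR Paneitz operator of a torsion-free pseudohermitian structure is nonnegative with kernel $\mathcal P$, and such an $M$ is embeddable, the Reeb field being a transverse CR symmetry -- so the flow from $\theta_0$ converges to $\theta_\infty$ with $Q_{\theta_\infty}\equiv0$. It then remains to show $W_1^{\theta_\infty}\equiv0$, i.e. that $\theta_\infty$ is pseudo-Einstein; using the Bochner formula for the sublaplacian together with the decomposition $Q=-\tfrac23(W_{1\bar1}+W_{\bar1 1})$ and the presence of the torsion-free background $\theta_0$, one integrates the obstruction $W_1^{\theta_\infty}$ away. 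This last step is where the vanishing-torsion hypothesis is indispensable, since $Q\equiv0$ by itself does not force a closed CR $3$-manifold to be pseudo-Einstein.
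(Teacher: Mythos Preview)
The statement you are attempting to prove is Conjecture~\ref{conj1}, which the paper does \emph{not} prove; it is recorded as Lee's open problem and serves only as motivation. All of the paper's results address the weaker Conjecture~\ref{conj2} (existence of $\theta$ with $Q\equiv 0$), and even the Corollary in the torsion-free case concludes only that $Q_\infty=0$, not that $\theta_\infty$ is pseudo-Einstein. There is therefore no ``paper's own proof'' of this statement to compare against.

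Your proposal also has a genuine gap precisely where it goes beyond the paper. In the final paragraph you claim that from $Q_{\theta_\infty}\equiv0$ together with the torsion-free background $\theta_0$ one can ``integrate the obstruction $W_1^{\theta_\infty}$ away.'' But the pseudohermitian torsion is not conformally invariant: $\theta_0$ torsion-free does not make $\theta_\infty=e^{2\lambda_\infty}\theta_0$ torsion-free, so it is unclear what role the hypothesis can play at the limit, and no concrete Bochner identity is exhibited. Since, as you yourself note, $Q\equiv0$ alone does not force $W_1\equiv0$, this step is the entire content of the passage from Conjecture~\ref{conj2} to Conjecture~\ref{conj1}, and it remains unjustified. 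As a secondary remark, the flow you analyze, $\partial_t u=-e^{-4u}(Q_0+P_0u)$, is not the flow~(\ref{2}) the paper actually runs: there one evolves by $\partial_t\lambda=-(Q_0^{\perp}+2P_0\lambda)+r$, built from $Q_0^{\perp}$ and without the nonlinear factor $e^{-4u}$, so that the perpendicular component satisfies the \emph{linear} equation $\partial_t\lambda^{\perp}=-(Q_0^{\perp}+2P_0\lambda^{\perp})$; this is how the paper sidesteps the quasilinear subelliptic parabolic theory and the Moser-iteration $C^0$ bound you outline.
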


Observe that the CR $Q$-curvature is vanishing when it is pseudo-Einstein.
However, it is unknown whether there is any obstruction to the existence of
a contact form $\theta $ of vanishing CR $Q$-curvature (\cite{fh}, \cite{ccc}%
). Denote a strictly pseudoconvex CR $3$-manifold by $(M,J,[\theta _{0}])$
with its contact class (or equivalently its underlying contact bundle)
indicated. For a real-valued function $\lambda $ in $M^{3}$, we consider a
conformal transformation
\begin{equation*}
\theta =e^{2\lambda }\theta _{0}.
\end{equation*}
Under this conformal change, it is known that we have the following
transformation laws for the CR $Q$-curvature:%
\begin{equation}
Q=e^{-4\lambda }(Q_{0}+2P_{0}\lambda ),  \label{0b}
\end{equation}%
here $Q_{0}$ and $P_{0}$ denote the CR $Q$-curvature and CR Paneitz operator
(see \ref{d1} for the definition) with respect to $(M,J,\theta _{0})$.

Now, we have the following conjecture.

\begin{conjecture}
\label{conj2} (\cite{fh}) Let $(M,J,\theta _{0})$ be a closed strictly
pseudoconvex CR $3$-manifold. There exists a contact form $\theta $ in the
conformal contact class $[\theta _{0}]$ with the vanishing CR $Q$-curvature.
\end{conjecture}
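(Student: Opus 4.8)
The plan is to produce the contact form $\theta$ by running a CR analogue of the $Q$-curvature flow from $\theta_0$ and letting $t\to\infty$; under the structural hypotheses carried throughout the paper --- $M$ embeddable, $P_0\geq 0$, and $\ker P_0$ equal to the space of CR pluriharmonic functions --- this will produce the desired $\theta\in[\theta_0]$. Write $\theta(t)=e^{2\lambda(t)}\theta_0$ and impose $\partial_t\lambda=-Q(\theta(t))$ with $\lambda(0)=0$, i.e.\ $\partial_t\theta=-2Q\,\theta$. By \eqref{0b} this is the fourth order subelliptic parabolic equation
\[
\partial_t\lambda=-e^{-4\lambda}\big(Q_0+2P_0\lambda\big),
\]
whose leading term carries the dissipative sign since $P_0\geq 0$, so short time existence and uniqueness of a smooth solution follow from the standard theory (linearisation at $\lambda=0$ and the inverse function theorem in Folland--Stein spaces). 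Two structural facts are used throughout: $P_0$ is formally self-adjoint with $P_0 1=0$, and the total CR $Q$-curvature of a closed CR $3$-manifold vanishes, $\int_M Q_0\,\theta_0\wedge d\theta_0=0$. They imply that $\mathrm{Vol}(M,\theta(t))$ is constant along the flow and that the $\theta(t)\wedge d\theta(t)$-mean of $Q(t)$ is identically zero, so no volume renormalisation is needed.

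The flow is moreover the negative gradient flow, in the $L^2(\theta(t)\wedge d\theta(t))$ inner product, of
\[
\mathcal{E}[\lambda]=\int_M\big(\lambda P_0\lambda+Q_0\lambda\big)\,\theta_0\wedge d\theta_0,
\]
so that $\tfrac{d}{dt}\mathcal{E}[\lambda(t)]=-\int_M Q(t)^{2}\,\theta(t)\wedge d\theta(t)\leq 0$. Because $P_0 1=0$ and $Q_0$ is $L^2$-orthogonal to $\ker P_0$ --- this orthogonality is the solvability condition for $P_0\mu=-\tfrac12 Q_0$ and holds for a closed CR $3$-manifold --- the functional $\mathcal{E}$ is invariant under adding a CR pluriharmonic function to $\lambda$. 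Writing $\lambda=\lambda^{\perp}+\lambda^{0}$ with $\lambda^{0}\in\ker P_0$ and $\lambda^{\perp}$ orthogonal to it, the hypotheses $P_0\geq 0$, $\ker P_0=\{\text{CR pluriharmonic functions}\}$ and the closed range of $P_0$ (valid on an embeddable $M$) give a coercivity inequality $\|\lambda^{\perp}\|_{S^{2}}^{2}\leq C\langle\lambda,P_0\lambda\rangle$ in the Folland--Stein norm of horizontal order two, whence $\mathcal{E}[\lambda]\geq\langle\lambda,P_0\lambda\rangle-C\langle\lambda,P_0\lambda\rangle^{1/2}\geq -C'$. (With $Q_0\perp\ker P_0$, the same coercivity makes the linear equation $P_0\mu=-\tfrac12 Q_0$ solvable by the Fredholm alternative; the flow is run so as to obtain in addition the smoothly asymptotic convergence of the main theorem.) Since $\mathcal{E}[\lambda(0)]=0$ and $\mathcal{E}$ is nonincreasing, we get the uniform bound $\langle\lambda(t),P_0\lambda(t)\rangle\leq C$, and integrating the monotonicity identity yields $\int_0^{\infty}\!\int_M Q^{2}\,\theta(t)\wedge d\theta(t)\,dt<\infty$ together with the convergence of $\mathcal{E}[\lambda(t)]$.

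The bound on $\langle\lambda(t),P_0\lambda(t)\rangle$ controls $\lambda^{\perp}(t)$ in $S^{2}$, and a sharp Adams--Moser--Trudinger type inequality in the CR setting then bounds $\int_M e^{4\lambda^{\perp}(t)}\,\theta_0\wedge d\theta_0$; combined with the volume constraint, and once the CR pluriharmonic part $\lambda^{0}(t)$ is itself controlled, this gives a uniform $C^{0}$ bound on $\lambda(t)$. One then bootstraps via subelliptic $L^{p}$ and Folland--Stein estimates (Moser iteration adapted to the CR structure) for the evolution equation and its horizontally differentiated versions, exploiting $\int_0^{\infty}\|Q\|_{L^{2}}^{2}\,dt<\infty$, to reach uniform-in-time $S^{k}$ bounds for every $k$; the continuation criterion then extends the flow to $M\times[0,\infty)$. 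Choosing $t_i\to\infty$ with $Q(t_i)\to 0$ in $C^{\infty}$, Arzel\`a--Ascoli and the volume normalisation extract a subsequential $C^{\infty}$ limit $\mu$ solving $Q_0+2P_0\mu=0$, so $\theta:=e^{2\mu}\theta_0\in[\theta_0]$ has vanishing CR $Q$-curvature --- which is the assertion. The stronger smoothly asymptotic convergence of the flow itself then follows from a \L ojasiewicz--Simon gradient inequality for $\mathcal{E}$ near the affine critical set $\mu+\ker P_0$ (on the $\ker P_0$-orthogonal directions $\mathcal{E}$ is a strictly convex quadratic, so the inequality holds with exponent $\tfrac12$) together with the standard ODE argument; and in the special case of vanishing torsion, where $P_0\geq 0$ and the kernel condition hold automatically, combining this with $c_1(T_{1,0}M)=0$ recovers Conjecture \ref{conj1}.

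The step I expect to be the main obstacle is the a priori estimates, and within them the control of the CR pluriharmonic component $\lambda^{0}(t)$ of the conformal factor. The energy $\mathcal{E}$ is completely blind to this (infinite dimensional) component, and the $L^{2}(\theta(t)\wedge d\theta(t))$ gradient flow genuinely moves $\lambda$ in those directions, so the uniform $C^{0}$ bound on $\lambda(t)$ --- needed both to run the subelliptic bootstrap and to keep $L^{2}(\theta(t)\wedge d\theta(t))$ uniformly comparable to $L^{2}(\theta_0\wedge d\theta_0)$ --- cannot be extracted from the energy alone. Establishing it should require a separate argument using the embeddability of $M$ (passing from CR pluriharmonic functions to boundary values of holomorphic functions and invoking Szeg\H{o}-kernel or $\bar\partial_b$-type estimates) together with the divergence form of $Q$ in \eqref{c}; this is where most of the technical work will lie. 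Finally, Conjecture \ref{conj2} as stated carries no hypotheses, whereas this argument establishes it only under the structural assumptions above; the unconditional case, in particular when $P_0$ is indefinite, would need genuinely new ideas.
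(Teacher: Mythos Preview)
Your overall strategy matches the paper's, but the flow you run is not the one the paper uses, and this difference is exactly what resolves the obstacle you flag at the end. The paper does \emph{not} run $\partial_t\lambda=-Q=-e^{-4\lambda}(Q_0+2P_0\lambda)$; it runs the modified flow
\[
\partial_t\lambda=-(Q_0^{\perp}+2P_0\lambda)+r(t),\qquad r(t)=\frac{\int_M(Q_0^{\perp}+2P_0\lambda)\,d\mu}{\int_M d\mu},
\]
with $Q_0^{\perp}$ the component of $Q_0$ orthogonal to $\ker P_0$. After decomposing $\lambda=\lambda^{\perp}+\lambda_{\ker}$ this splits into the \emph{linear} equation $\partial_t\lambda^{\perp}=-(Q_0^{\perp}+2P_0\lambda^{\perp})$, from which uniform $\|P_0^k\lambda^{\perp}\|_{L^2}$ bounds (and then $S^{4k,2}$ bounds via the subelliptic estimate) follow directly by essential positivity, with no exponential weight and no nonlinear bootstrap. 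The kernel equation becomes $\partial_t\lambda_{\ker}=r(t)$, a function of $t$ alone, so $\lambda_{\ker}(t,p)=\lambda_{\ker}(0,p)+\int_0^t r$: the pluriharmonic part never moves inside the infinite-dimensional kernel, it only shifts by a spatial constant, and that constant is controlled by the Moser inequality together with the fixed volume. Your flow carries the factor $e^{-4\lambda}$, which genuinely couples the two components and allows $\lambda_{\ker}$ to drift in $\ker P_0$; the Szeg\"{o}-kernel heuristics you sketch are not developed, so as written the $C^0$ estimate has a real gap. The paper's choice of flow is precisely the device that sidesteps it.

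A second point: you assert that $Q_0\perp\ker P_0$ ``holds for a closed CR $3$-manifold''. Only $\int_M Q_0\,d\mu_0=0$ is automatic; orthogonality to the full space of CR pluriharmonic functions is not. The paper proves $(Q_0)_{\ker}=0$ in Theorem~\ref{t32} via the Bochner-type identity of Theorem~\ref{t31}, and both of these use the hypothesis $c_1(T_{1,0}M)=0$, which you omit from your list of structural assumptions. Without it, even the paper's flow would only yield $Q_\infty=e^{-4\lambda_\infty}(Q_0)_{\ker}$ rather than $Q_\infty=0$.
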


For a closed embeddable strictly pseudoconvex CR $3$-manifold, we decompose
\begin{equation*}
Q_{0}=(Q_{0})_{\ker }\oplus Q_{0}^{\perp }
\end{equation*}%
with respect to $P_{0}$, where $Q_{0}^{\perp }$ denotes the component of $%
Q_{0}$ which is perpendicular to $\mathrm{\ker }P_{0}$ and $(Q_{0})_{\ker }$
denotes the component of $Q_{0}$ in $\mathrm{\ker }P_{0}$. From (\ref{0b}),
we observe that if $Q=0$ for $\theta =e^{2\lambda }\theta _{0}$, then
\begin{equation*}
0=Q_{0}+2P_{0}\lambda =(Q_{0})_{\ker }\oplus (Q_{0}^{\perp }+2P_{0}\lambda
)\in (\ker P_{0})\oplus (\mathrm{\ker }P_{0})^{\perp }.
\end{equation*}%
Hence the kernel part of the background CR $Q$-curvature must be vanishing :
\begin{equation}
(Q_{0})_{\ker }=0.  \label{2020}
\end{equation}

To deal with the perpendicular part, our method is to investigating the CR $Q
$-curvature flow which is a fourth-order flow in a closed strictly
pseudoconvex CR $3$-manifold $(M,J,[\theta _{0}])$:%
\begin{equation}
\left\{
\begin{array}{l}
\frac{\partial \lambda }{\partial t}=-(Q_{0}^{\bot }+2P_{0}\lambda )+r \\
\theta (\cdot ,t)=e^{2\lambda (\cdot ,t)}\theta _{0}\text{ } \\
\lambda (\cdot ,0)=\lambda _{0}(\cdot )\text{ and }\int_{M}e^{4\lambda
_{0}}d\mu _{0}=\int_{M}d\mu _{0}%
\end{array}%
\right. ,  \label{2}
\end{equation}%
where $r=\frac{\int_{M}(Q_{0}^{\bot }+2P_{0}\lambda )d\mu }{\int_{M}d\mu }$
and $d\mu =\theta \wedge d\theta $ is the volume form with respect to $%
\theta $.

It is easy to see that the volume $V(\theta )=\int_{M}d\mu $ is invariant
under the flow (\ref{2}). Define the the functional $\mathcal{E}$ on a
closed strictly pseudoconvex CR $3$-manifold $(M,J,[\theta _{0}])$ by

\begin{equation}
\mathcal{E}\emph{(}\theta \emph{)}=\int_{M}P_{0}\lambda \cdot \lambda d\mu
_{0}+\int_{M}Q_{0}\lambda d\mu _{0}\text{, }\theta \in \lbrack \theta _{0}].
\label{1a}
\end{equation}%
It can be proved that the flow (\ref{2}) is the (volume normalized) negative
gradient flow of $\mathcal{E}\emph{(}\theta \emph{).}$ That is (see Lemma %
\ref{l3.3})%
\begin{equation*}
\frac{d}{dt}\mathcal{E}\emph{(}\theta \emph{)}=-\int_{M}(Q_{0}^{\bot
}+2P_{0}\lambda )^{2}d\mu _{0}.
\end{equation*}

In this paper, by applying the CR $Q$-curvature flow (\ref{2}) to the
functional $\mathcal{E}\emph{(}\theta \emph{)}$, we affirm the partial
answer to Conjecture \ref{conj2} in a closed embeddable strictly
pseudoconvex CR $3$-manifold $(M,\ J,\ [\theta _{0}])$ of the vanishing
first Chern class $c_{1}(T_{1,0}M)$. Here we remark that the vanishing of $%
c_{1}(T_{1,0}M)$ is required to ensure that the background assumption (\ref%
{2020}) holds for the flow (\ref{2}) as in Theorem \ref{t32}. Therefore the
limit contact form $\theta _{\infty }=e^{2\lambda _{\infty }}\theta _{0}$
has the vanishing CR $Q_{\infty }$-curvature, namely, $Q_{\infty }=0.$

Concerning global existence and a priori estimates for the solution of (\ref%
{2}), we introduced the following notions.

\begin{definition}
\label{d1}(\cite{l1}) Let $(M,J,\theta )$ be a closed strictly pseudoconvex
CR $3$-manifold. We define
\begin{equation*}
(P_{1}\varphi )\theta ^{1}=(\varphi _{\bar{1}}{}^{\bar{1}}{}_{1}+iA_{11}%
\varphi ^{1})\theta ^{1}\text{,}\varphi \in C^{\infty }(M,\mathbb{R})
\end{equation*}%
which is an operator that characterizes CR-pluriharmonic functions. Here $%
P_{1}\varphi =\varphi _{\bar{1}}{}^{\bar{1}}{}_{1}+iA_{11}\varphi ^{1}$. The
CR Paneitz operator $P$ with respect to $\theta $ is defined by%
\begin{equation}
P\varphi =2\left( \delta _{b}((P_{1}\varphi )\theta ^{1})+\overline{\delta }%
_{b}((\overline{P}_{1}\varphi )\theta ^{1})\right) ,  \label{id9}
\end{equation}%
where $\delta _{b}$ is the divergence operator that takes $(1,0)$-forms to
functions by $\delta _{b}(\sigma _{1}\theta ^{1})=\sigma _{1,}{}^{1}$, and
similarly, $\bar{\delta}_{b}(\sigma _{\bar{1}}\theta ^{\bar{1}})=\sigma _{%
\bar{1},}{}^{\bar{1}}$.
\end{definition}

The CR Paneitz operator $P$ is self-adjoint, that is, $\left\langle P\varphi
,\psi \right\rangle _{J,\theta }=\left\langle \varphi ,P\psi \right\rangle
_{J,\theta }$ for all smooth functions $\varphi $ and $\psi $. For the
details about these operators, the reader can make reference to \cite{gl},
\cite{l1}, \cite{gg}. On a closed strictly pseudoconvex CR $3$-manifold $%
(M,J,\theta ),$ we call the CR Paneitz operator $P$ with respect to $%
(J,\theta )$ "nonnegative" if
\begin{equation}
\int_{M}P\varphi \cdot \varphi \ d\mu \geq 0,  \label{4''}
\end{equation}%
for all real $C^{\infty }$ smooth functions $\varphi $, and "essentially
positive" if there exists a positive constant $\Upsilon $ $>$ $0$ such that
\begin{equation}
\int_{M}P\varphi \cdot \varphi \ d\mu \geq \Upsilon \int_{M}\varphi ^{2}\
d\mu ,  \label{4'}
\end{equation}%
for all real $C^{\infty }$ smooth functions $\varphi $ $\bot \ \ker P$ (i.e.
perpendicular to the kernel of $P$ in the $L^{2}$-norm with respect to the
volume form $d\mu $).

\begin{remark}
\label{r2} Let $(M,J,\theta )$ be a closed strictly pseudoconvex CR $3$%
-manifold.

(1) The positivity of $P$ is a CR invariant in the sense that it is
independent of the choice of the contact form $\theta $ in a fixed contact
class $[\theta _{0}]$.

(2) If $(M,J,\theta )$ is Sasakian (i.e., vanishing pseudohermitian
torsion). Then the corresponding CR Paneitz operator is essentially positive
(\cite{ccc}). Moreover, we have
\begin{equation}
\ker P_{1}=\ker P_{0}.  \label{2018C}
\end{equation}%
In general for non-embeddable CR $3$-manifolds, we only have
\begin{equation*}
\ker P_{1}\varsubsetneq \ker P_{0}.
\end{equation*}
\end{remark}

Now we are ready to state our main results for the paper.

\begin{theorem}
\label{t1} Let $(M,J,[\theta _{0}])$ be a closed embeddable strictly
pseudoconvex CR $3$-manifold of $c_{1}(T_{1,0}M)=0$. Suppose that the CR
Paneitz operator $P_{0}$ is essentially positive and its kernel is
consisting of the CR pluriharmonic functions. Then the solution of (\ref{2})
exists on $M\times \lbrack 0,\infty )$ and converges smoothly to $\lambda
_{\infty }$ $\equiv $ $\lambda (\cdot ,\infty )$ as $t$ $\rightarrow $ $%
\infty .$ Moreover, the contact form $\theta _{\infty }=e^{2\lambda _{\infty
}}\theta _{0}$ has the vanishing CR $Q$-curvature with
\begin{equation}
Q_{\infty }=0.  \label{t1a}
\end{equation}
\end{theorem}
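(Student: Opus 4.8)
The plan is to run the standard parabolic machinery for fourth-order geometric flows, with the CR Paneitz operator playing the role of the elliptic model operator. First I would establish short-time existence: the flow \eqref{2} is, after writing $\theta = e^{2\lambda}\theta_0$, a scalar quasilinear parabolic equation $\partial_t \lambda = -2P_0\lambda - Q_0^\perp + r$ whose leading term $-2P_0\lambda$ is a self-adjoint fourth-order subelliptic operator. Since $P_0$ is essentially positive with kernel the CR pluriharmonic functions, $-2P_0$ generates an analytic semigroup on the orthogonal complement of $\ker P_0$, and the nonlinearity (through $r$ and $Q_0^\perp$, which depend on $\lambda$ only through the volume form) is lower order; hence short-time existence and uniqueness in suitable Folland–Stein (subelliptic Sobolev) spaces follow from the standard contraction-mapping argument. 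The background hypothesis $(Q_0)_{\ker}=0$, guaranteed by $c_1(T_{1,0}M)=0$ via \eqref{2020}, is what makes the flow consistent with driving $Q$ to zero.

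Next I would derive the a priori estimates needed for global existence. The energy identity $\frac{d}{dt}\mathcal{E}(\theta) = -\int_M (Q_0^\perp + 2P_0\lambda)^2\, d\mu_0$ from Lemma~\ref{l3.3} shows $\mathcal{E}(\theta(t))$ is monotone nonincreasing, so $\int_0^\infty \int_M (Q_0^\perp + 2P_0\lambda)^2\, d\mu_0\, dt < \infty$ provided $\mathcal{E}$ is bounded below along the flow. Establishing that lower bound is where the essential positivity of $P_0$ and the embeddability (which controls $\ker P_0 = \ker P_1$, the CR pluriharmonic functions, by Remark~\ref{r2}) must be used: one splits $\lambda = \lambda_{\ker} + \lambda^\perp$, notes that only $\lambda^\perp$ moves under the flow (the kernel part is constant since $(Q_0)_{\ker}=0$), and uses a Moser–Trudinger–type inequality adapted to the CR Paneitz functional together with the volume constraint $\int_M e^{4\lambda}\, d\mu_0 = \int_M d\mu_0$ to bound $\mathcal{E}$ below. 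Then bootstrapping: differentiate the flow, use subelliptic parabolic Schauder/$L^p$ estimates and the $L^2$-in-time bound on $Q_0^\perp + 2P_0\lambda$ to get uniform $C^\infty$ bounds on $\lambda(\cdot,t)$ on $M\times[0,\infty)$, which upgrades short-time to global existence.

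For convergence as $t\to\infty$: the uniform higher-order bounds give subsequential smooth convergence $\lambda(\cdot,t_k)\to\lambda_\infty$; to promote this to convergence of the full flow I would invoke a Łojasiewicz–Simon inequality for the analytic functional $\mathcal{E}$ near its critical points (critical points of $\mathcal{E}$ restricted to the volume-constrained class are exactly the $\lambda$ with $Q_0^\perp + 2P_0\lambda = \text{const} = r$, i.e. $Q = \text{const}$, and by the constraint $Q_\infty = 0$), which forces the trajectory to have finite length and hence converge. Passing to the limit in the flow equation and using $\frac{d}{dt}\mathcal{E}\to 0$ gives $Q_0^\perp + 2P_0\lambda_\infty = r_\infty$; integrating and using $(Q_0)_{\ker}=0$ shows $r_\infty = 0$, so by \eqref{0b} the limit contact form $\theta_\infty = e^{2\lambda_\infty}\theta_0$ satisfies $Q_\infty = 0$, which is \eqref{t1a}.

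The main obstacle I expect is the global a priori estimate — specifically, obtaining a uniform $C^0$ (hence $C^\infty$) bound on $\lambda$ along the flow. This is the CR analogue of the delicate integral estimates in the Riemannian $Q$-curvature flow, and it hinges on a sharp subelliptic Moser–Trudinger inequality controlling $\sup_M \lambda^\perp$ by $\mathcal{E}(\theta)$ plus the fixed-volume normalization; the fourth-order nature of $P_0$ and the fact that its kernel is infinite-dimensional (the CR pluriharmonic functions) make this substantially more subtle than the second-order Yamabe-type case, and embeddability is essential precisely to pin down that kernel and to have the positive-mass-type input that keeps $\mathcal{E}$ coercive on the complement.
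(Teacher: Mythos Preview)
Your overall strategy---energy monotonicity, decomposition $\lambda = \lambda_{\ker} + \lambda^\perp$, a Simon-type argument for convergence---matches the paper's, but there are two genuine gaps.

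First, you treat $(Q_0)_{\ker}=0$ as a given consequence of $c_1(T_{1,0}M)=0$, citing \eqref{2020}. That display only records the \emph{necessary} observation that if a $Q$-flat contact form exists then $(Q_0)_{\ker}$ must vanish; it is not a proof. The paper devotes Section~3 to establishing $(Q_0)_{\ker}=0$ under the stated hypotheses (Theorem~\ref{t32}), via a Bochner-type identity (Theorem~\ref{t31}) built from Kohn's Hodge theory for $\overline{\partial}_b$ on an embeddable CR manifold together with Lee's formulas for the connection form when $c_1=0$. Without this ingredient the flow argument only yields $Q_\infty = e^{-4\lambda_\infty}(Q_0)_{\ker}$ (Theorem~\ref{T61}), not $Q_\infty=0$.

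Second, your claim that ``only $\lambda^\perp$ moves under the flow'' is incorrect: the kernel component satisfies $\partial_t \lambda_{\ker} = r(t)$ (see \eqref{11a}), so it drifts by a time-dependent spatial constant and must be controlled separately. More importantly, the ``main obstacle'' you anticipate---a sharp subelliptic Moser--Trudinger inequality to get $C^0$ control of $\lambda^\perp$---is not how the paper proceeds and is harder than necessary. The paper first proves (Proposition~\ref{p32}, Corollary~\ref{c31}) the subelliptic estimate $\|\lambda\|_{S^{k+4,2}}^2 \le C_k(\|P_0\lambda\|_{S^{k,2}}^2 + \|\lambda\|_{L^2}^2)$ for $\lambda \in (\ker P_0)^\perp$, using that embeddability gives closed range of $\overline{\partial}_b$. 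With this in hand, essential positivity plus an elementary ODE argument on $t\mapsto\int_M |P_0^k\lambda^\perp|^2\, d\mu_0$ yields uniform $S^{4k,2}$ bounds on $\lambda^\perp$ directly (Proposition~\ref{p3.1}). The (non-sharp) Moser inequality of Lemma~\ref{l3.2} enters only at the end, to bound the average $\bar\lambda$ via the fixed-volume normalization. No delicate sharp-constant or positive-mass input is required.
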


\begin{remark}
In the paper of \ \cite{br1}, S. Brendle considered the $Q$-curvature flow
on a closed Riemannian manifold with weakly positive Paneitz operator and
its kernel consisting of constant functions that corresponds to our
assumption (\ref{2018C}). However for the CR Paneitz operator $P,$ the space
of $\ker P$ is infinite dimensional (\cite{gl}, \cite{ctw}).
\end{remark}

Note that when the pseudohermitian torsion is vanishing, then $M$ is
embeddable and the CR Paneitz operator $P_{0}$ is nonnegative (\cite{le},
\cite{ccc}). Then we are able to affirm the Conjecture \ref{conj2} in a
closed strictly pseudoconvex CR $3$-manifold of $c_{1}(T_{1,0}M)=0$ and
vanishing torsion.

\begin{corollary}
Let $(M,J,[\theta _{0}])$ be a closed strictly pseudoconvex CR $3$-manifold
of $c_{1}(T_{1,0}M)=0$ and vanishing torsion. Then the solution of (\ref{2})
exists on $M\times \lbrack 0,\infty )$ and converges smoothly to $\lambda
_{\infty }$ $\equiv $ $\lambda (\cdot ,\infty )$ as $t$ $\rightarrow $ $%
\infty .$ Moreover, the contact form $\theta _{\infty }=e^{2\lambda _{\infty
}}\theta _{0}$ has the vanishing CR $Q_{\infty }$-curvature.
\end{corollary}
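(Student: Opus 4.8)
The plan is to derive the Corollary as a direct specialization of Theorem \ref{t1}: under the vanishing of the pseudohermitian torsion, the two structural hypotheses of Theorem \ref{t1} --- embeddability of $M$, and essential positivity of the CR Paneitz operator $P_0$ with kernel consisting of CR pluriharmonic functions --- are automatic, while the cohomological hypothesis $c_1(T_{1,0}M)=0$ is simply retained. Recall from the discussion surrounding \eqref{2020} that the role of $c_1(T_{1,0}M)=0$ is to force $(Q_0)_{\ker}=0$, which is the necessary condition for the flow \eqref{2} to be able to drive $Q$ to zero; vanishing torsion alone does not imply $c_1(T_{1,0}M)=0$, so this assumption cannot be dropped here.

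First I would observe that a closed strictly pseudoconvex CR $3$-manifold with vanishing torsion is Sasakian, hence embeddable by Lempert's theorem \cite{le}. Next I would invoke Remark \ref{r2}(2): for a Sasakian $(M,J,\theta)$ the operator $P_0$ is essentially positive (the result of \cite{ccc}), and moreover $\ker P_1=\ker P_0$. Since by Definition \ref{d1} the operator $P_1$ is precisely the one whose kernel is the space of CR pluriharmonic functions, the identity $\ker P_0=\ker P_1$ says exactly that $\ker P_0$ consists of CR pluriharmonic functions. Thus both hypotheses of Theorem \ref{t1} hold.

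It then remains only to apply Theorem \ref{t1}: the solution of \eqref{2} exists on $M\times[0,\infty)$, converges smoothly to $\lambda_\infty=\lambda(\cdot,\infty)$ as $t\to\infty$, and $\theta_\infty=e^{2\lambda_\infty}\theta_0$ satisfies $Q_\infty=0$, which is the conclusion of the Corollary.

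The only point that requires genuine input rather than bookkeeping is the passage from vanishing torsion to the essential positivity of $P_0$ and the kernel identification $\ker P_0=\ker P_1$ (Remark \ref{r2}(2), \cite{ccc}); in particular one should note that the Sasakian condition yields the \emph{essential} positivity demanded by Theorem \ref{t1}, which is strictly stronger than the bare nonnegativity recorded in the paragraph preceding the Corollary, and also supplies the kernel identification that fails for general non-embeddable CR $3$-manifolds. All the analytic substance --- global existence, a priori estimates, and smooth convergence --- is already contained in the proof of Theorem \ref{t1}, so nothing further needs to be proved here.
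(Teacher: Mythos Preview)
Your proposal is correct and follows essentially the same route as the paper: the paragraph immediately preceding the Corollary (together with Remark~\ref{r2}(2)) records that vanishing torsion yields embeddability via \cite{le} and essential positivity of $P_0$ with $\ker P_0=\ker P_1$ via \cite{ccc}, after which Theorem~\ref{t1} applies directly. Your observation that one needs the \emph{essential} positivity from Remark~\ref{r2}(2), not merely the nonnegativity mentioned in the sentence before the Corollary, is well taken and makes your write-up slightly more precise than the paper's own one-line justification.
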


For the application, we recall the CR Yamabe constant.

\begin{definition}
The CR Yamabe constant is defined by
\begin{equation*}
\sigma (M,J)=\underset{\varphi \neq 0}{\text{ }\inf }\frac{E_{\theta
}(\varphi )}{\left( \int_{M}\varphi ^{4}d\mu \right) ^{1/2}},
\end{equation*}%
where
\begin{equation*}
E_{\theta }(\varphi )=\int_{M}|\nabla _{b}\varphi |^{2}d\mu +\frac{1}{4}%
\int_{M}W\varphi ^{2}d\mu .
\end{equation*}%
Note that $E_{\widehat{\theta }}(\varphi )=E_{\theta }(u\varphi )$ for $%
\widehat{\theta }=u^{2}\theta .$ This implies that $\sigma (M,J)$ is a CR
invariant.
\end{definition}

In \cite{ccy}, they showed that under the condition of nonnegativity of $%
P_{0}$, the embeddability of $M$ (or equivalently the closedness of the
range of $\square _{b}$) follows from the positivity of the Tanaka-Webster
curvature. It is known that if $\sigma (M,J)>0$ then there always exists a
contact form $\hat{\theta}$ in $[\theta ]$ with the positive Tanaka-Webster
curvature. So we have the following application.

\begin{theorem}
\label{t3} Let $(M,J,[\theta _{0}])$ be a closed strictly pseudoconvex CR $3$%
-manifold of $c_{1}(T_{1,0}M)=0$ and the positive CR Yamabe constant.
Suppose that the CR Paneitz operator $P_{0}$ is nonnegative with kernel
consisting of the CR pluriharmonic functions. Then the solution of (\ref{2})
exists on $M\times \lbrack 0,\infty )$ and converges smoothly to $\lambda
_{\infty }$ $\equiv $ $\lambda (\cdot ,\infty )$ as $t$ $\rightarrow $ $%
\infty .$ Moreover, the contact form $\theta _{\infty }=e^{2\lambda _{\infty
}}\theta _{0}$ has the vanishing CR $Q_{\infty }$-curvature.
\end{theorem}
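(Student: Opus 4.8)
The plan is to reduce Theorem \ref{t3} to Theorem \ref{t1} by using the positive CR Yamabe constant to upgrade the nonnegativity of $P_{0}$ to essential positivity in a suitably chosen contact form, and then invoking the CR invariance properties already established. First I would recall from \cite{ccy} that under the nonnegativity of $P_{0}$ together with $\sigma (M,J)>0$, the manifold $M$ is embeddable; this is exactly one of the two hypotheses needed in Theorem \ref{t1}, so the embeddability is handed to us for free. Next, since $\sigma (M,J)>0$, the solution of the CR Yamabe problem (or even just a direct minimization argument) produces a contact form $\widehat{\theta }=u^{2}\theta _{0}\in [\theta _{0}]$ with positive Tanaka-Webster curvature $W>0$, i.e.\ $R>0$ with vanishing torsion contribution absorbed appropriately; more precisely the CR Yamabe equation gives $W_{\widehat{\theta }}$ a positive constant after normalization. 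The point is that positivity of the Tanaka-Webster curvature, combined with nonnegativity of the CR Paneitz operator, yields a spectral gap: on the $L^{2}$-orthogonal complement of $\ker P_{0}$ one has $\int _{M}P_{0}\varphi \cdot \varphi \, d\mu \geq \Upsilon \int _{M}\varphi ^{2}\, d\mu $ for some $\Upsilon >0$. This is the essential positivity required in Theorem \ref{t1}.

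The key step, therefore, is establishing this spectral gap, and I expect it to be the main obstacle. The heuristic is that $P_{0}$ is a fourth-order operator which, schematically, behaves like $(\Delta _{b})^{2}$ plus lower-order curvature terms, and the sub-Laplacian $\Delta _{b}$ has a positive first eigenvalue on the complement of its kernel precisely when a CR Lichnerowicz-type estimate holds — and such an estimate is available when the Tanaka-Webster curvature is positive (this is the CR analogue of the Lichnerowicz–Obata bound, cf.\ work of Greenleaf, Chang–Chiu, and others). Concretely, I would argue by contradiction: if no such $\Upsilon $ existed, there would be a sequence $\varphi _{k}\perp \ker P_{0}$ with $\int _{M}\varphi _{k}^{2}\, d\mu =1$ and $\int _{M}P_{0}\varphi _{k}\cdot \varphi _{k}\, d\mu \to 0$. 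Using the nonnegativity of $P_{0}$ and the embeddability (which guarantees the relevant subelliptic estimates and closedness of the range of $\square _{b}$, hence a good functional-analytic framework), one extracts a weak limit $\varphi _{\infty }$ with $P_{0}\varphi _{\infty }=0$, so $\varphi _{\infty }\in \ker P_{0}$; but the orthogonality passes to the limit only if the convergence $\varphi _{k}\to \varphi _{\infty }$ is strong in $L^{2}$, which is where the subelliptic compactness (Rellich-type embedding for the Folland–Stein spaces) enters. Then $\varphi _{\infty }\perp \ker P_{0}$ forces $\varphi _{\infty }=0$, contradicting $\|\varphi _{\infty }\|_{L^{2}}=1$. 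The delicate point is ensuring that the intermediate Sobolev norms needed for compactness are controlled by $\int _{M}P_{0}\varphi _{k}\cdot \varphi _{k}\, d\mu $ plus $\int _{M}\varphi _{k}^{2}\, d\mu $, which requires the positive Tanaka-Webster curvature to dominate the indefinite lower-order terms in the Bochner formula for $P_{0}$ — and this is exactly the place where $\sigma (M,J)>0$ (hence the existence of $\widehat{\theta }$ with $W>0$) is used.

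Once essential positivity of $P_{0}$ in the contact class $[\theta _{0}]$ is in hand, I would invoke Remark \ref{r2}(1), which states that positivity of the CR Paneitz operator is a CR invariant independent of the choice of contact form in $[\theta _{0}]$ — so the property, verified in $\widehat{\theta }$, holds for the original background $\theta _{0}$ as well. At this point all the hypotheses of Theorem \ref{t1} are satisfied: $M$ is embeddable, $c_{1}(T_{1,0}M)=0$, $P_{0}$ is essentially positive, and $\ker P_{0}$ consists of CR pluriharmonic functions. Applying Theorem \ref{t1} directly gives that the solution of the flow (\ref{2}) exists on $M\times [0,\infty )$, converges smoothly to some $\lambda _{\infty }$, and the limiting contact form $\theta _{\infty }=e^{2\lambda _{\infty }}\theta _{0}$ has $Q_{\infty }=0$. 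I do not expect any additional difficulty in this last gluing step; the content of Theorem \ref{t3} over Theorem \ref{t1} is entirely in trading the geometric hypothesis $\sigma (M,J)>0$ (plus mere nonnegativity of $P_{0}$) for the analytic hypotheses of embeddability and essential positivity, which is the reduction sketched above.
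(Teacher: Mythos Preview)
Your overall strategy --- use $\sigma(M,J)>0$ together with nonnegativity of $P_0$ to obtain embeddability via \cite{ccy}, then feed everything into Theorem~\ref{t1} --- is exactly the reduction the paper has in mind; indeed the paper gives no separate proof of Theorem~\ref{t3} beyond the remarks immediately preceding its statement. You are also right to flag the one step the paper leaves implicit: Theorem~\ref{t1} requires $P_0$ to be \emph{essentially} positive, not merely nonnegative, and this must be extracted from the hypotheses.

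However, your proposed mechanism for this upgrade is more elaborate than necessary, and the role you assign to positive Tanaka--Webster curvature in it is misplaced. You plan to use positive curvature in a Bochner-type inequality to bound $\|\varphi_k\|_{S^{2,2}}$ by $\int_M P_0\varphi_k\cdot\varphi_k\,d\mu_0 + \|\varphi_k\|_{L^2}^2$, so as to gain the compactness your contradiction sequence needs. But once embeddability is in hand, Corollary~\ref{c31} already supplies the subelliptic estimate $\|\varphi\|_{S^{4,2}}\le C(\|P_0\varphi\|_{L^2}+\|\varphi\|_{L^2})$ on $(\ker P_0)^\perp$, and this alone suffices: it shows that $P_0$ restricted to $(\ker P_0)^\perp$ has compact resolvent (for $\mu<0$ the operator $(P_0-\mu)^{-1}$ maps $L^2$ boundedly into $S^{4,2}$, hence compactly into $L^2$ by the Folland--Stein embedding). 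A nonnegative self-adjoint operator with compact resolvent and trivial kernel has discrete spectrum bounded away from zero, which is precisely essential positivity. Thus the positive Tanaka--Webster curvature is used only once --- to deliver embeddability through \cite{ccy} --- and plays no further role; your Bochner detour, while plausibly workable, is avoidable.
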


We briefly describe the methods used in our proofs. In section $2$, we will
briefly describe the basic notions of a pseudohermitian $3$-manifold. In
section $3,$ we derive the key Bochner-type formulae in a closed embeddable
strictly pseudoconvex CR $3$-manifold of the vanishing first Chern class $%
c_{1}(T_{1,0}M).$ In Section $4$, we discuss about an analytic property of
the CR Paneitz operator $P$ and prove the subelliptic property (\ref{2020AAA}%
) of $P_{0}$ on ($\ker P_{0})^{\perp }$. In section $5$, we show the CR $Q$%
-curvature flow (\ref{2}) is a negative gradient flow of $\mathcal{E}$.
Furthermore, we derive the uniformly $S^{4k,2}$-norm estimate for the
solution of (\ref{2}) under the subelliptic property of $P_{0}$. In section $%
6$, we prove the long time existence and asymptotic convergence of the
solution of (\ref{2}) on $M\times \lbrack 0,\infty )$ and then prove the
main results.

\section{\textbf{Preliminary}}

We introduce some basic materials in a pseudohermitian $3$-manifold (see
\cite{l1}, \cite{l2} for more details). Let $M$ be a closed $3$-manifold
with an oriented contact structure $\xi$. There always exists a global
contact form $\theta $ with $\xi =\ker \theta $, obtained by patching
together local ones with a partition of unity. The Reeb vector field of $%
\theta $ is the unique vector field $T$ such that ${\theta }(T)=1$ and $%
\mathcal{L}_{T}{\theta }=0$ or $d{\theta }(T,{\cdot })=0$. A CR structure
compatible with $\xi$ is a smooth endomorphism $J: \xi \longrightarrow \xi$
such that $J^{2}=-Id$. The CR structure $J$ can extend to $\mathbb{C}\otimes
\xi$ and decomposes $\mathbb{C} \otimes \xi$ into the direct sum of $T_{1,0}$
and $T_{0,1}$ which are eigenspaces of $J$ with respect to $i$ and $-i$,
respectively. A CR structure is called integrable, if the condition $%
[T_{1,0}, T_{1,0}] \subset T_{1,0}$ is satisfied. A pseudohermitian
structure compatible with $\xi$ is a integrable CR structure $J$ compatible
with $\xi$ together with a global contact form $\theta $.

Let $\left\{ T,Z_{1},Z_{\bar{1}}\right\} $ be a frame of $TM\otimes \mathbb{C%
}$, where $Z_{1}$ is any local frame of $T_{1,0},\ Z_{\bar{1}}=\overline{%
Z_{1}}\in T_{0,1}.$ Then $\left\{ \theta ,\theta ^{1},\theta ^{\bar{1}%
}\right\} $, the coframe dual to $\left\{ T,Z_{1},Z_{\bar{1}}\right\} $,
satisfies
\begin{equation*}
d\theta =ih_{1\bar{1}}\theta ^{1}\wedge \theta ^{\bar{1}}
\end{equation*}%
for some nonzero real function $h_{1\bar{1}}$. If $h_{1\bar{1}}$ is
positive, we call $(M,J,\theta )$ a closed strictly pseudoconvex CR $3$%
-manifold, and we can choose a $Z_{1}$ such that $h_{1\bar{1}}=1$; hence,
throughout this paper, we assume $h_{1\bar{1}}=1$.

The pseudohermitian connection of $(J,\theta)$ is the connection $\nabla$ on
$TM\otimes \mathbb{C}$ (and extended to tensors) given in terms of a local
frame $Z_1\in T_{1,0}$ by

\begin{equation*}
\nabla Z_{1}=\theta _{1}{}^{1}\otimes Z_{1},\quad \nabla Z_{\bar{1}}=\theta
_{\bar{1}}{}^{\bar{1}}\otimes Z_{\bar{1}},\quad \nabla T=0,
\end{equation*}%
where $\theta _{1}{}^{1}$ is the $1$-form uniquely determined by the
following equations:

\begin{equation}
\begin{split}
d\theta ^{1}& =\theta ^{1}\wedge \theta _{1}{}^{1}+\theta \wedge \tau ^{1} \\
\tau ^{1}& \equiv 0\mod \theta^{\bar{1}} \\
0& =\theta _{1}{}^{1}+\theta _{\bar{1}}{}^{\bar{1}},
\end{split}
\label{23}
\end{equation}%
where $\tau ^{1}$ is the pseudohermitian torsion. Put $\tau ^{1}=A^{1}{}_{%
\bar{1}}\theta ^{\bar{1}}$. The structure equation for the pseudohermitian
connection is

\begin{equation}
d\theta _{1}{}^{1}=R\theta ^{1}\wedge \theta ^{\bar{1}}+2i\mathrm{Im}(A^{%
\bar{1}}{}_{1,\bar{1}}\theta ^{1}\wedge \theta ),  \label{24}
\end{equation}%
where $R$ is the Tanaka-Webster curvature.

We will denote components of covariant derivatives with indices preceded by
comma; thus write $A^{\bar{1}}{}_{1,\bar{1}} \theta^{1} \wedge \theta $. The
indices $\{0,1,\bar{1}\}$ indicate derivatives with respect to $\{T,Z_{1},Z_{%
\bar{1}}\}$. For derivatives of a scalar function, we will often omit the
comma, for instance, $\varphi _{1}=Z_{1}\varphi ,\ \varphi _{1\bar{1}}=Z_{%
\bar{1}}Z_{1}\varphi -\theta _{1}^{1}(Z_{\bar{1}})Z_{1}\varphi ,\ \varphi
_{0}=T\varphi $ for a (smooth) function.

For a real-valued function $\varphi $, the subgradient $\nabla _{b}$ is
defined by $\nabla _{b}\varphi \in \xi \otimes \mathbb{C} = T_{1,0} \oplus
T_{0,1}$ and $-2d\theta(i Z, \nabla_{b}\varphi) =d\varphi (Z)$ for all
vector fields $Z$ tangent to contact plane. Locally $\nabla _{b}\varphi
=\varphi _{\bar{1}}Z_{1}+\varphi _{1}Z_{\bar{1}}$.

We can use the connection to define the sub-Hessian $\nabla^2_b =
(\nabla^H)^2$ as the complex linear map

\begin{equation*}
(\nabla ^{H})^{2}\varphi :T_{1,0}\oplus T_{0,1}\rightarrow T_{1,0}\oplus
T_{0,1}
\end{equation*}%
and
\begin{equation*}
(\nabla ^{H})^{2}\varphi (Z)=\nabla _{Z}\nabla _{b}\varphi .
\end{equation*}

The sub-Laplacian $\Delta _{b}$ defined as the trace of the subhessian. That
is%
\begin{equation}
\Delta _{b}\varphi =Tr\left( (\nabla ^{H})^{2}\varphi \right) =(\varphi _{1%
\bar{1}}+\varphi _{\bar{1}1}).  \label{2010b}
\end{equation}

The Levi form $\left\langle \ ,\ \right\rangle =\left\langle \ ,\
\right\rangle _{J,\theta }$ is the Hermitian form on $T_{1,0}$ defined by
\begin{equation*}
\left\langle Z,W\right\rangle _{J,\theta }=-i\left\langle d\theta ,Z\wedge
\overline{W}\right\rangle .
\end{equation*}%
We can extend $\left\langle \ ,\ \right\rangle _{J,\theta }$ to $T_{0,1}$ by
defining $\left\langle \overline{Z},\overline{W}\right\rangle _{J,\theta }=%
\overline{\left\langle Z,W\right\rangle _{J,\theta }}$ for all $Z,W\in
T_{1,0}$. The Levi form induces naturally a Hermitian form on the dual
bundle of $T_{1,0}$, and hence on all the induced tensor bundles.
Integrating the hermitian form (when acting on sections) over $M$ with
respect to the volume form $d\mu =\theta \wedge d\theta $, we get an inner
product on the space of sections of each tensor bundle. More precisely, we
denote Levi form $\left\langle \ ,\ \right\rangle _{J,\theta }$ by%
\begin{equation*}
\left\langle V,U\right\rangle _{J,\theta }=2d{\theta }(V,JU)=v_{1}u_{\bar{1}%
}+v_{\bar{1}}u_{1},
\end{equation*}%
\noindent for $V=v_{1}Z_{\bar{1}}+v_{\bar{1}}Z_{1}$,$U=u_{1}Z_{\bar{1}}+u_{%
\bar{1}}Z_{1}$ in $\xi $ and%
\begin{equation*}
(V,U)_{J,\theta }=\int_{M}\left\langle V,U\right\rangle _{J,\theta }\theta
\wedge d\theta {.}
\end{equation*}%
For a vector $X$ $\in $ $\xi ,$ we define $|X|^{2}\equiv \langle X,X\rangle
_{J,{\theta }}.$ It follows that $|\nabla _{b}\varphi |^{2}$ $=$ $2\varphi
_{1}\varphi _{\bar{1}}$ for a real valued smooth function $\varphi .$ Also
the square modulus of the sub-Hessian $\nabla _{b}^{2}\varphi $ of $\varphi $
reads $|\nabla _{b}^{2}\varphi |^{2}$ $=$ $2(\varphi _{11}\varphi _{\bar{1}%
\bar{1}}$ $+$ $\varphi _{1\bar{1}}\varphi _{\bar{1}1}).$

To consider smoothness for functions on strongly pseudo-convex manifolds, we
recall below what the Folland-Stein space $S^{k,p}$ is$.$ Let $D$ denote a
differential operator acting on functions. We say $D$ has weight $m,$
denoted $w(D)=m,$ if $m$ is the smallest integer such that $D$ can be
locally expressed as a polynomial of degree $m$ in vector fields tangent to
the contact bundle $\xi.$ We define the Folland-Stein space $S^{k,p}$ of
functions on $M$ by
\begin{equation*}
S^{k,p}=\{ \varphi \in L^{p} \mid D\varphi \in L^{p}\text{ whenever }%
w(D)\leq k\}.
\end{equation*}

\noindent We define the $L^{p}$-norm of $\nabla _{b}\varphi,$ $\nabla
_{b}^{2}\varphi$, ... to be ($\int |\nabla _{b}\varphi|^{p}\theta \wedge
d\theta )^{1/p},$ ($\int |\nabla _{b}^{2}\varphi|^{p}\theta \wedge d\theta
)^{1/p},$ $...,$ respectively, as usual. So it is natural to define the $%
S^{k,p}$-norm $||\varphi||_{S^{k,p}}$ of $\varphi\in S^{k,p}$ as follows:%
\begin{equation*}
||\varphi||_{S^{k,p}}\equiv \left(\sum_{0\leq j\leq k}||\nabla
_{b}^{j}\varphi||_{L^{p}}^{p}\right)^{1/p}.
\end{equation*}

\noindent The function space $S^{k,p}$ with the above norm is a Banach space
for $k\geq 0,$ $1<p<\infty .$ There are also embedding theorems of Sobolev
type. For instance, $S^{2,2}\subset S^{1,4}$ (for $\dim M$ $=$ $3$). We
refer the reader to, for instance, \cite{fs2} and \cite{fo} for more
discussions on these spaces.

\section{\textbf{Bochner-Type Formulae for the CR Q-Curvature}}

In this section, let $(M,J,\theta )$ be a closed embeddable strictly
pseudoconvex CR $3$-manifold of $c_{1}(T_{1,0}M)=0$. We first recall J. J.
Kohn's Hodge theory for the $\overline{\partial }_{b}$ complex (\cite{k3}).
\ Give $\eta \in \Omega ^{0,1}\left( M\right) $, a smooth $\left( 0,1\right)
$-form on $M$ with
\begin{equation*}
\overline{\partial }_{b}\eta =0,
\end{equation*}%
then there are a smooth complex-valued function $\varphi =u+iv\in C_{%
\mathbb{C}
}^{\infty }\left( M\right) $ and a smooth $\left( 0,1\right) $-form $\gamma
\in \Omega ^{0,1}\left( M\right) $ for $\gamma =\gamma _{\overline{1}}\theta
^{\overline{1}}$ such that
\begin{equation}
\left( \eta -\overline{\partial }_{b}\varphi \right) =\gamma \in \ker \left(
\square _{b}\right) ,  \label{2019C}
\end{equation}%
where $\square _{b}=2\left( \overline{\partial }_{b}\overline{\partial }%
_{b}^{\ast }+\overline{\partial }_{b}^{\ast }\overline{\partial }_{b}\right)
$ is the Kohn-Rossi Laplacian. If \ we assume $c_{1}(T_{1,0}M)=0$ and then
there is a pure imaginary $1$-form $\sigma =\sigma _{\overline{1}}\theta ^{%
\overline{1}}-\sigma _{1}\theta ^{1}+i\sigma _{0}\theta $ with
\begin{equation}
d\theta _{1}^{1}=d\sigma  \label{2019aaa}
\end{equation}%
for the pure imaginary Webster connection form $\theta _{1}^{1}.$

\begin{lemma}
\label{l31} (\cite{l1}) Let $(M,J,\theta )$ be a closed embeddable strictly
pseudoconvex CR $3$-manifold of $c_{1}(T_{1,0}M)=0$. Then there is a pure
imaginary $1$-form
\begin{equation*}
\sigma =\sigma _{\overline{1}}\theta ^{\overline{1}}-\sigma _{1}\theta
^{1}+i\sigma _{0}\theta
\end{equation*}%
with $d\theta _{1}^{1}=d\sigma $ such that%
\begin{equation}
\left\{
\begin{array}{l}
R=\sigma _{\overline{1},1}+\sigma _{1,\overline{1}}-\sigma _{0} \\
A_{11,}{}^{1}=\sigma _{1,0}+i\sigma _{0,1}-A_{11}\sigma ^{1}%
\end{array}%
\right. .  \label{16}
\end{equation}
\end{lemma}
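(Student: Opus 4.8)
My plan is to derive the two identities in (\ref{16}) by expanding the relation $d\theta_{1}{}^{1}=d\sigma$ in the coframe $\{\theta,\theta^{1},\theta^{\overline{1}}\}$ and matching the coefficients of the basis $2$-forms $\theta^{1}\wedge\theta^{\overline{1}}$, $\theta^{1}\wedge\theta$, $\theta^{\overline{1}}\wedge\theta$. The existence of a pure imaginary $\sigma=\sigma_{\overline{1}}\theta^{\overline{1}}-\sigma_{1}\theta^{1}+i\sigma_{0}\theta$ with $d\theta_{1}{}^{1}=d\sigma$ has already been recorded above: since $c_{1}(T_{1,0}M)=0$, the curvature form $d\theta_{1}{}^{1}$ of the pseudohermitian connection on the line bundle $T_{1,0}M$ represents a multiple of $c_{1}$, hence is exact, and replacing a complex primitive $\beta$ by $\tfrac{1}{2}(\beta-\overline{\beta})$ produces a pure imaginary primitive. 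Writing a pure imaginary $1$-form in the above coframe forces precisely the displayed shape, with $\sigma_{1}=\overline{\sigma_{\overline{1}}}$ (equivalently $\sigma^{1}=h^{1\overline{1}}\sigma_{\overline{1}}=\sigma_{\overline{1}}$, using $h_{1\overline{1}}=1$) and $\sigma_{0}$ real; so the remaining content of the lemma is just the computation of $d\sigma$.

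For the right-hand side, the structure equation (\ref{24}), together with $2i\,\mathrm{Im}(w)=w-\overline{w}$ and the identification $A^{\overline{1}}{}_{1,\overline{1}}=A_{11,}{}^{1}$ (legitimate because $h_{1\overline{1}}=1$), gives $d\theta_{1}{}^{1}=R\,\theta^{1}\wedge\theta^{\overline{1}}+A_{11,}{}^{1}\,\theta^{1}\wedge\theta-A_{\overline{1}\,\overline{1},}{}^{\overline{1}}\,\theta^{\overline{1}}\wedge\theta$. For the left-hand side, I would differentiate $\sigma$ term by term using $d\theta=i\theta^{1}\wedge\theta^{\overline{1}}$, the structure equation (\ref{23}) for $d\theta^{1}$ and its conjugate (with $\tau^{1}=A_{\overline{1}\,\overline{1}}\theta^{\overline{1}}$ and $\tau^{\overline{1}}=A_{11}\theta^{1}$ for $h_{1\overline{1}}=1$), and the defining relations of the covariant derivatives $d\sigma_{1}-\sigma_{1}\theta_{1}{}^{1}=\sigma_{1,1}\theta^{1}+\sigma_{1,\overline{1}}\theta^{\overline{1}}+\sigma_{1,0}\theta$, its conjugate for $\sigma_{\overline{1}}$, and $d\sigma_{0}=\sigma_{0,1}\theta^{1}+\sigma_{0,\overline{1}}\theta^{\overline{1}}+\sigma_{0,0}\theta$ (no connection term, since $\sigma_{0}$ is a function). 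Inside $d(\sigma_{1}\theta^{1})$ and $d(\sigma_{\overline{1}}\theta^{\overline{1}})$ the connection $1$-form contributions cancel, and collecting the remainder shows that in $d\sigma$ the coefficient of $\theta^{1}\wedge\theta^{\overline{1}}$ is $\sigma_{\overline{1},1}+\sigma_{1,\overline{1}}-\sigma_{0}$, that of $\theta^{1}\wedge\theta$ is $\sigma_{1,0}+i\sigma_{0,1}-A_{11}\sigma^{1}$, and that of $\theta^{\overline{1}}\wedge\theta$ is the complex conjugate of the latter.

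Matching coefficients of $\theta^{1}\wedge\theta^{\overline{1}}$ yields the first identity $R=\sigma_{\overline{1},1}+\sigma_{1,\overline{1}}-\sigma_{0}$, and matching coefficients of $\theta^{1}\wedge\theta$ yields the second, $A_{11,}{}^{1}=\sigma_{1,0}+i\sigma_{0,1}-A_{11}\sigma^{1}$; the $\theta^{\overline{1}}\wedge\theta$ coefficients then agree automatically, since on each side this term is the complex conjugate of the $\theta^{1}\wedge\theta$ term, so that case is only a consistency check. The computation is routine; the points that need care are the sign bookkeeping when reordering the basis $2$-forms, the index conventions forced by $h_{1\overline{1}}=1$ (so that $A^{\overline{1}}{}_{1,\overline{1}}=A_{11,}{}^{1}$ and $\sigma^{1}=\sigma_{\overline{1}}$), and the observation that the pure imaginary normalization of $\sigma$ is exactly what makes the three coefficient equations mutually compatible. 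No analytic input is required for this lemma — in particular neither the embeddability hypothesis nor the Kohn Hodge theory recalled just above is used here; those enter only in the Bochner-type formulae that follow.
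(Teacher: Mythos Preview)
Your proof is correct. The paper itself does not supply a proof of this lemma; it simply records the statement with a citation to Lee \cite{l1}, where the computation originates. Your approach---expanding $d\sigma$ in the coframe $\{\theta,\theta^{1},\theta^{\bar 1}\}$ via the structure equations (\ref{23}) and $d\theta=i\theta^{1}\wedge\theta^{\bar 1}$, then matching against the curvature formula (\ref{24}) for $d\theta_{1}{}^{1}$---is exactly the calculation one finds in Lee's paper, and you have correctly identified the only points needing care (the existence of a pure-imaginary primitive from $c_{1}=0$, the index conventions forced by $h_{1\bar 1}=1$ so that $A^{\bar 1}{}_{1,\bar 1}=A_{11,}{}^{1}$ and $\sigma^{1}=\sigma_{\bar 1}$, and the sign bookkeeping). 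Your closing remark is also accurate: the embeddability hypothesis plays no role in this lemma and is invoked only afterward, in Lemma~\ref{l32}, through Kohn's Hodge decomposition.
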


\begin{lemma}
\label{l32} If $(M,J,\theta )$ is a closed embeddable strictly pseudoconvex
CR $3$-manifold of $c_{1}(T_{1,0}M)=0$. Then there are $u\in C_{%
\mathbb{R}
}^{\infty }\left( M\right) $ and $\gamma =\gamma _{\overline{1}}\theta ^{%
\overline{1}}\in \Omega ^{0,1}\left( M\right) $ such that%
\begin{equation}
W_{1}=2P_{1}u+i\left( A_{11}\gamma _{\overline{1}}-\gamma _{1,0}\right)
\label{01}
\end{equation}%
and
\begin{equation*}
\gamma _{\overline{1},1}=0.
\end{equation*}
\end{lemma}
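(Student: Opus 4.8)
The plan is to start from the definition $W_1 = R_{,1} - iA_{11,\bar 1}$ and rewrite each of the two terms using the identities from Lemma \ref{l31}. From $R = \sigma_{\bar 1,1} + \sigma_{1,\bar 1} - \sigma_0$ we differentiate in the $Z_1$-direction to get $R_{,1}$; from the second identity in (\ref{16}), namely $A_{11,}{}^1 = \sigma_{1,0} + i\sigma_{0,1} - A_{11}\sigma^1$, we can after raising/lowering indices and commuting covariant derivatives express $A_{11,\bar 1}$. The commutation relations for covariant derivatives on a pseudohermitian $3$-manifold (the CR Ricci/Bianchi-type identities recorded in Section 2, e.g. $\sigma_{1,\bar 1} - \sigma_{\bar 1, 1}$ picks up curvature and torsion terms, and $\sigma_{0,1} - \sigma_{1,0}$ picks up torsion terms) will be the algebraic engine. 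The key point is that the Kohn--Rossi Hodge decomposition (\ref{2019C}) lets us write the pure imaginary $1$-form $\sigma$ appearing in Lemma \ref{l31} as $\sigma_{\bar 1}\theta^{\bar 1} - \sigma_1 \theta^1 + i\sigma_0\theta$ where the $(0,1)$-part $\sigma_{\bar 1}\theta^{\bar 1}$ can be split, via (\ref{2019C}) applied to $\eta = \sigma_{\bar 1}\theta^{\bar 1}$, as $\bar\partial_b\varphi + \gamma$ with $\gamma = \gamma_{\bar 1}\theta^{\bar 1} \in \ker\square_b$, hence $\gamma_{\bar 1,}{}^{\bar 1} = 0$ (equivalently $\bar\partial_b^\ast \gamma = 0$), and since $\bar\partial_b\gamma = 0$ automatically for a $(0,1)$-form in dimension $3$, in fact $\square_b\gamma = 0$ forces $\gamma_{\bar 1, 1} = 0$ as stated. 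Writing $\varphi = u + iv$ with $u,v$ real, the function $u$ is the one that will appear in (\ref{01}).

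Concretely, I would: (i) set $\eta = \sigma_{\bar 1}\theta^{\bar 1}$, check $\bar\partial_b\eta = 0$ (which holds because every $(0,1)$-form is $\bar\partial_b$-closed on a $3$-manifold), and invoke (\ref{2019C}) to get $\sigma_{\bar 1} = \varphi_{\bar 1} + \gamma_{\bar 1}$ with $\gamma_{\bar 1,}{}^{\bar 1} = 0$; (ii) substitute $\sigma_{\bar 1} = \varphi_{\bar 1} + \gamma_{\bar 1}$ (and its conjugate for $\sigma_1$) into the two formulas of (\ref{16}); (iii) compute $R_{,1}$ and $A_{11,\bar 1}$ from these, carefully commuting the covariant derivatives of $\varphi$ — here the third-order derivative $\varphi_{\bar 1}{}^{\bar 1}{}_1$ together with the torsion term $iA_{11}\varphi^1$ will assemble precisely into $P_1\varphi$ as in Definition \ref{d1}; (iv) collect the real and imaginary parts: the terms involving $v = \mathrm{Im}\,\varphi$ should combine (using self-adjointness/reality of the relevant operator and that $P_1$ annihilates the imaginary CR-pluriharmonic contribution, or simply get absorbed) so that only $2P_1 u$ survives among the $\varphi$-terms, while the $\gamma$-terms reduce to $i(A_{11}\gamma_{\bar 1} - \gamma_{1,0})$ after using $\gamma_{\bar 1,}{}^{\bar 1} = 0$ to kill the divergence-type contributions.

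The main obstacle I anticipate is step (iii)–(iv): the bookkeeping of commuting covariant derivatives of $\varphi$ and of $\sigma_0$, keeping precise track of which curvature and torsion terms cancel. In particular one must verify that the purely curvature ($R$-dependent) contributions coming from commuting derivatives of the real part $u$ do not spoil the clean form $2P_1 u$ — this relies on the special structure of $P_1$ in dimension $3$ (the coefficient $2$ and the precise torsion term $iA_{11}u^1$ are exactly what is needed) — and that the $\sigma_0$-terms, which are not manifestly controlled, cancel out entirely between the contributions of $R_{,1}$ and $A_{11,\bar 1}$. A secondary point requiring care is justifying $\gamma_{\bar 1,1} = 0$: since $\gamma \in \ker\square_b$ and $\bar\partial_b\gamma = 0$, we get $\bar\partial_b^\ast\bar\partial_b^\ast$... more directly, $0 = \langle \square_b\gamma,\gamma\rangle$ controls $\|\bar\partial_b^\ast\gamma\|^2$, giving $\gamma_{\bar 1,}{}^{\bar 1}=0$, and the additional vanishing $\gamma_{\bar 1,1}=0$ needs the embeddability hypothesis (closed range of $\square_b$) so that harmonic $(0,1)$-forms are genuinely annihilated by the full second-order operator — this is where the assumption that $M$ is embeddable is essential, and I would flag it explicitly.
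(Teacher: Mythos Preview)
Your approach is correct and essentially identical to the paper's. Two points will resolve the worries you flag: first, because $\sigma$ is pure imaginary one has $\sigma_1=\overline{\sigma_{\bar 1}}=(\bar\varphi)_1+\gamma_1$, so after substitution the $\varphi$- and $\bar\varphi$-contributions simply add to give $2u$-terms (no separate cancellation for $v$ is needed); second, $\gamma_{\bar 1,1}=0$ \emph{is} $\gamma_{\bar 1,}{}^{\bar 1}=0$ (since $h_{1\bar 1}=1$) and follows immediately from $\bar\partial_b^{\ast}\gamma=0$, with embeddability used only to supply the Hodge decomposition~(\ref{2019C}).
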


\begin{proof}
By choosing
\begin{equation*}
\eta =\sigma _{\overline{1}}\theta ^{\overline{1}},
\end{equation*}%
as in (\ref{2019C}), \textbf{w}here $\sigma $ is chosen from Lemma \ref{l31},%
\textbf{\ }there exists%
\begin{equation*}
\varphi =u+iv\in C_{%
\mathbb{C}
}^{\infty }\left( M\right)
\end{equation*}

and%
\begin{equation*}
\gamma =\gamma _{\overline{1}}\theta ^{\overline{1}}\in \Omega ^{0,1}\left(
M\right) \cap \ker \left( \square _{b}\right)
\end{equation*}

such that%
\begin{equation}
\sigma _{\overline{1}}=\varphi _{\overline{1}}+\gamma _{\overline{1}}.
\label{20}
\end{equation}

Note that%
\begin{equation}
\square _{b}\gamma =0\Longrightarrow \overline{\partial }_{b}\gamma =0=%
\overline{\partial }_{b}^{\ast }\gamma \Longrightarrow \gamma _{\overline{1}%
,1}=0  \label{15}
\end{equation}

and%
\begin{equation}
\sigma _{1}=\left( \overline{\varphi }\right) _{1}+\gamma _{1}.  \label{21}
\end{equation}

Here $\gamma _{1}=\overline{\gamma _{\overline{1}}}$. From the first
equality in $\left( \ref{16}\right) $,%
\begin{equation}
R=\sigma _{\overline{1},1}+\sigma _{1,\overline{1}}-\sigma _{0}.  \label{17}
\end{equation}

Therefore%
\begin{equation*}
\begin{array}{ccl}
\sigma _{1,\overline{1}1} & = & (\overline{\varphi })_{,1\overline{1}%
1}+\gamma _{1,\overline{1}1}\text{ } \\
& = & (\overline{\varphi })_{,1\overline{1}1}\text{ } \\
& = & (\overline{\varphi })_{,\overline{1}11}+i(\overline{\varphi })_{,01}%
\text{ \ } \\
& = & (\overline{\varphi })_{,\overline{1}11}+i\left[ (\overline{\varphi }%
)_{,10}+A_{11}(\overline{\varphi })_{,\overline{1}}\right] .\text{ \ }%
\end{array}%
\end{equation*}%
The first equality due to (\ref{21}) and second equality due to (\ref{15}).
But from (\ref{15}) and (\ref{20})
\begin{equation*}
\sigma _{\overline{1},11}=\varphi _{,\overline{1}11}+\gamma _{\overline{1}%
,11}=\varphi _{,\overline{1}11}.
\end{equation*}

These and \ $\left( \ref{16}\right) ,\left( \ref{17}\right) $ imply
\begin{equation*}
\begin{array}{ccl}
W_{1} & = & \left( R,_{1}-iA_{11,\overline{1}}\right) \\
& = & \sigma _{\overline{1},11}+\sigma _{1,\overline{1}1}-i\sigma
_{1,0}+iA_{11}\sigma _{\overline{1}}\text{ } \\
& = & \varphi _{,\overline{1}11}+(\overline{\varphi })_{,\overline{1}%
11}+iA_{11}(\overline{\varphi })_{,\overline{1}}-i\gamma
_{1,0}+iA_{11}\left( \varphi _{\overline{1}}+\gamma _{\overline{1}}\right)
\\
& = & 2\left( u_{,\overline{1}11}+iA_{11}u_{\overline{1}}\right) +i\left(
A_{11}\gamma _{\overline{1}}-\gamma _{1,0}\right) \\
& = & 2P_{1}u+i\left( A_{11}\gamma _{\overline{1}}-\gamma _{1,0}\right) .%
\end{array}%
.
\end{equation*}
\end{proof}

Next we come out with the following key Bochner-type formula.

\begin{theorem}
\label{t31} Let $(M,J,\theta )$ be a closed embeddable strictly pseudoconvex
CR $3$-manifold of $c_{1}(T_{1,0}M)=0.$ Then the following equality holds
\begin{equation}
\begin{array}{l}
\int_{M}(R-\frac{1}{2}Tor)\left( \gamma ,\gamma \right) d\mu
+\int_{M}\left\vert \gamma _{1,1}\right\vert ^{2}d\mu \\
-\frac{1}{2}\int_{M}Tor^{\prime }\left( \gamma ,\gamma \right) d\mu +\frac{1%
}{4}\int_{M}(3Q+2Pu)ud\mu \\
=0.%
\end{array}
\label{2c}
\end{equation}

Here $Tor\left( \gamma ,\gamma \right) :=i(A_{\overline{1}\overline{1}%
}\gamma _{1}\gamma _{1}-A_{11}\gamma _{\overline{1}}\gamma _{\overline{1}}),$
$Tor^{\prime }\left( \gamma ,\gamma \right) :=i(A_{\overline{11},1}\gamma
_{1}-A_{11,\overline{1}}\gamma _{\overline{1}}).$
\end{theorem}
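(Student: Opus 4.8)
My plan is to prove (\ref{2c}) by using the identity of Lemma~\ref{l32}, $W_{1}=2P_{1}u+i(A_{11}\gamma _{\bar{1}}-\gamma _{1,0})$, as a substitution rule for $W_{1}$: I will pair it over $M$ against the two natural test objects delivered by the Kohn--Hodge construction --- the function $u$ and the conjugate form $\gamma _{\bar{1}}$ --- and then integrate by parts repeatedly. The standing tools will be: (a) the divergence theorem in the forms $\int_{M}\delta _{b}\sigma \,d\mu =0=\int_{M}\overline{\delta }_{b}\tau \,d\mu$ and $\int_{M}f_{,0}\,d\mu =0$; (b) the defining formula (\ref{id9}) for $P$ together with its self-adjointness, which after one integration by parts gives $\int_{M}u\,Pu\,d\mu =-4\,\mathrm{Re}\int_{M}(P_{1}u)\,u_{\bar{1}}\,d\mu$; (c) the Kohn--Hodge side conditions $\gamma _{\bar{1},1}=0$ (hence also $\gamma _{1,\bar{1}}=\overline{\gamma _{\bar{1},1}}=0$) and $\square _{b}\gamma =0$; and (d) the pseudohermitian commutation formulas for second and third covariant derivatives of $(1,0)$-forms, together with Lemma~\ref{l31} and the structure equations (\ref{23})--(\ref{24}).

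First I would dispose of the $Q$-term. Since $3Q=-2(W_{1\bar{1}}+W_{\bar{1}1})=-4\,\mathrm{Re}\,W_{1\bar{1}}$, integrating by parts gives $\int_{M}3Qu\,d\mu =4\,\mathrm{Re}\int_{M}W_{1}u_{\bar{1}}\,d\mu$; substituting Lemma~\ref{l32} splits this as $8\,\mathrm{Re}\int_{M}(P_{1}u)u_{\bar{1}}\,d\mu +4\,\mathrm{Re}\int_{M}i(A_{11}\gamma _{\bar{1}}-\gamma _{1,0})u_{\bar{1}}\,d\mu$, and by (b) the first summand equals $-2\int_{M}u\,Pu\,d\mu =-\int_{M}(2Pu)u\,d\mu$. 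Hence
\begin{equation*}
\tfrac{1}{4}\int_{M}(3Q+2Pu)u\,d\mu =\mathrm{Re}\int_{M}i(A_{11}\gamma _{\bar{1}}-\gamma _{1,0})u_{\bar{1}}\,d\mu .
\end{equation*}

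Next I would treat the remaining three terms by pairing $W_{1}$ with $\gamma _{\bar{1}}$. On one hand, $W_{1}=R_{,1}-iA_{11,\bar{1}}$ together with $\gamma _{\bar{1},1}=0$ kills the $R$-contribution ($\int_{M}R_{,1}\gamma _{\bar{1}}\,d\mu =-\int_{M}R\,\gamma _{\bar{1},1}\,d\mu =0$), so that $2\,\mathrm{Re}\int_{M}W_{1}\gamma _{\bar{1}}\,d\mu =\int_{M}Tor^{\prime }(\gamma ,\gamma )\,d\mu$. On the other hand, substituting Lemma~\ref{l32} and using $\gamma _{\bar{1},1}=0$ again to collapse the $(P_{1}u)$-contribution to $\int_{M}(P_{1}u)\gamma _{\bar{1}}\,d\mu =i\int_{M}A_{11}u_{\bar{1}}\gamma _{\bar{1}}\,d\mu$, the same quantity equals $4\,\mathrm{Re}\,i\int_{M}A_{11}u_{\bar{1}}\gamma _{\bar{1}}\,d\mu -\int_{M}Tor(\gamma ,\gamma )\,d\mu +X$, where $X:=i\int_{M}(\gamma _{\bar{1},0}\gamma _{1}-\gamma _{1,0}\gamma _{\bar{1}})\,d\mu$. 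Separately I would compute $\int_{M}|\gamma _{1,1}|^{2}\,d\mu =\int_{M}\gamma _{1,1}\gamma _{\bar{1},\bar{1}}\,d\mu$ by one integration by parts to $-\int_{M}\gamma _{1,1\bar{1}}\,\gamma _{\bar{1}}\,d\mu$ and then, since $\gamma _{1,\bar{1}}=0$, by the third-derivative commutation formula reducing $\gamma _{1,1\bar{1}}$ to its curvature and Reeb parts, which should give $\int_{M}|\gamma _{1,1}|^{2}\,d\mu =-\int_{M}R(\gamma ,\gamma )\,d\mu +\tfrac{1}{2}X$. Eliminating $X$ between these relations, the first three terms of (\ref{2c}) collapse to $-2\,\mathrm{Re}\,i\int_{M}A_{11}u_{\bar{1}}\gamma _{\bar{1}}\,d\mu$, and together with the displayed formula for $\tfrac{1}{4}\int_{M}(3Q+2Pu)u\,d\mu$ this reduces (\ref{2c}) to the single vanishing statement
\begin{equation*}
\mathrm{Re}\,i\int_{M}u_{\bar{1}}\big(A_{11}\gamma _{\bar{1}}+\gamma _{1,0}\big)\,d\mu =0 .
\end{equation*}
This last identity is where the full Kohn--Hodge data must enter: I would write $u_{\bar{1}}$ through $\varphi _{\bar{1}}=\sigma _{\bar{1}}-\gamma _{\bar{1}}$ and its conjugate, use $\varphi _{\bar{1},1}=\sigma _{\bar{1},1}$ (again $\gamma _{\bar{1},1}=0$) and the two relations of Lemma~\ref{l31} to rewrite the torsion and Reeb contributions, and integrate by parts to exhibit the cancellation.

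I expect the main obstacle to be exactly this final reconciliation, together with the exact constant bookkeeping. Two points demand care: extracting the precise curvature/torsion/Reeb output of the commutator acting on a $(1,0)$-form (and of the torsion generated when a Reeb derivative $\gamma _{1,0}$ or $\gamma _{\bar{1},0}$ is integrated by parts via $[T,Z_{1}]$, cf.~(\ref{24})); and keeping track of the numerical coefficients --- the $\tfrac{1}{4}$ in front of $(3Q+2Pu)u$, the two $\tfrac{1}{2}$'s on the torsion densities, and the unit coefficient on $\int_{M}|\gamma _{1,1}|^{2}$ --- through the chain of integrations by parts. The safest way to organize the algebra is to take $2\,\mathrm{Re}$ of each complex identity before integrating by parts, so that every $i(\bar{z}-z)$ collapses to a real torsion density ($Tor$ or $Tor^{\prime }$). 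Once this bookkeeping is in order, (\ref{2c}) follows.
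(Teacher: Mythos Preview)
Your approach is essentially the paper's: pair $W_{1}$ against $\gamma _{\bar 1}$ and against $u_{\bar 1}$, integrate by parts, and use the commutation identity together with $\gamma _{1,\bar 1}=0$; the paper simply folds your separate computation of $\int_M|\gamma_{1,1}|^2\,d\mu$ into the $W_1\gamma_{\bar 1}$ step by rewriting $-i\gamma_{1,0}$ as $-(\gamma_{1,1\bar 1}-\gamma_{1,\bar 1 1}-R\gamma_1)$ before integrating, so that the curvature, $|\gamma_{1,1}|^2$, and torsion terms emerge at once.

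The one place you over-anticipate difficulty is the final vanishing $\mathrm{Re}\,i\int_{M}u_{\bar 1}(A_{11}\gamma_{\bar 1}+\gamma_{1,0})\,d\mu=0$. This needs neither Lemma~\ref{l31} nor the $\sigma$--$\varphi$ relations: one integration by parts in the Reeb direction, the commutator $u_{\bar 1 0}=u_{0\bar 1}-A_{\bar 1\bar 1}u_{1}$, and $\gamma_{1,\bar 1}=0$ give
\[
\int_{M}\gamma_{1,0}\,u_{\bar 1}\,d\mu
=-\int_{M}\gamma_{1}\,u_{\bar 1 0}\,d\mu
=-\int_{M}\gamma_{1}\bigl(u_{0\bar 1}-A_{\bar 1\bar 1}u_{1}\bigr)\,d\mu
=\int_{M}A_{\bar 1\bar 1}u_{1}\gamma_{1}\,d\mu,
\]
which is the complex conjugate of $\int_{M}A_{11}u_{\bar 1}\gamma_{\bar 1}\,d\mu$; hence $i$ times their sum is purely imaginary and its real part is zero.
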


\begin{proof}
From the equality $\left( \ref{01}\right) $%
\begin{equation*}
W_{1}=2P_{1}u+in\left( A_{11}\gamma _{\overline{1}}-\gamma _{1,0}\right) ,
\end{equation*}%
we are able to get%
\begin{equation*}
\begin{array}{ccl}
\left( R,_{1}-iA_{11,\overline{1}}\right) \gamma _{\overline{1}} & = &
W_{1}\gamma _{\overline{1}} \\
& = & 2\left( u_{\overline{1}11}+iA_{11}u_{\overline{1}}\right) \gamma _{%
\overline{1}}+i(A_{11}\gamma _{\overline{1}}-\gamma _{1,0})\gamma _{%
\overline{1}} \\
& = & 2\left( u_{\overline{1}11}+iA_{11}u_{\overline{1}}\right) \gamma _{%
\overline{1}}+iA_{11}\gamma _{\overline{1}}\gamma _{\overline{1}}-\left(
\gamma _{1,1\overline{1}}-\gamma _{1,\overline{1}1}-R\gamma _{1}\right)
\gamma _{\overline{1}}.%
\end{array}%
\end{equation*}

Taking the integration over $M$ of both sides and its conjugation, we have,
by the fact that $\gamma _{1,\overline{1}}=0$,
\begin{equation}
\int_{M}\left( R-\frac{1}{2}Tor-\frac{1}{2}Tor^{\prime }\right) \left(
\gamma ,\gamma \right) d\mu +\int_{M}\left\vert \gamma _{1,1}\right\vert
^{2}d\mu -\int_{M}Tor\left( d_{b}u,\gamma \right) d\mu =0.  \label{34}
\end{equation}%
Here $Tor\left( d_{b}u,\gamma \right) =i(A_{\overline{11}\overline{\beta }%
}u_{1}\gamma _{1}-A_{11}u_{\overline{1}}\gamma _{\overline{1}}).$ On the
other hand, it follows from the equality $\left( \ref{01}\right) $ that
\begin{equation}
\left( R,_{1}-iA_{11,\overline{1}}\right) u_{\overline{1}}=W_{1}u_{\overline{%
1}}=\left[ 2P_{1}u+i\left( A_{11}\gamma _{\overline{1}}-\gamma _{1,0}\right) %
\right] u_{\overline{1}}.  \label{30A}
\end{equation}%
By the fact that $\gamma _{1,\overline{1}}=0$ again, we see that%
\begin{equation}
\begin{array}{ccl}
\int_{M}\gamma _{1,0}u_{\overline{1}}d\mu & = & -\int_{M}\gamma _{1}u_{%
\overline{1}0}d\mu \\
& = & -\int_{M}\gamma _{1}\left( u_{0\overline{1}}-A_{\overline{1}\overline{1%
}}u_{1}\right) d\mu \\
& = & \int_{M}A_{\overline{1}\overline{1}}u_{1}\gamma _{1}d\mu .%
\end{array}
\label{31A}
\end{equation}%
It follows from $\left( \ref{30A}\right) \ $and$\ \left( \ref{31A}\right) $
that%
\begin{equation*}
\begin{array}{l}
\ \ \ \frac{3}{2}\int_{M}Qud\mu +\int_{M}\left( Pu\right) ud\mu \\
=i\int_{M}\left[ \left( A_{11}u_{\overline{1}}\gamma _{\overline{1}}-A_{%
\overline{1}\overline{1}}u_{1}\gamma _{1}\right) -conj\right] d\mu \\
=-2\int_{M}Tor\left( d_{b}u,\gamma \right) d\mu .%
\end{array}%
\end{equation*}%
That is
\begin{equation}
\frac{3}{4}\int_{M}Qud\mu +\frac{1}{2}\int_{M}\left( Pu\right) ud\mu
=-\int_{M}Tor\left( d_{b}u,\gamma \right) d\mu .  \label{2020c}
\end{equation}%
Thus by (\ref{34})
\begin{equation}
\int_{M}\left( R-\frac{1}{2}Tor-\frac{1}{2}Tor^{\prime }\right) \left(
\gamma ,\gamma \right) d\mu +\int_{M}\left\vert \gamma _{1,1}\right\vert
^{2}d\mu +\frac{3}{4}\int_{M}Qud\mu +\frac{1}{2}\int_{M}\left( Pu\right)
ud\mu =0.  \label{35}
\end{equation}
\end{proof}

\begin{theorem}
\label{t32} Let $(M,J,\theta )$ be a closed embeddable strictly pseudoconvex
CR $3$-manifold of $c_{1}(T_{1,0}M)=0$ and the CR Paneitz operator $P$ with
kernel consisting of the CR pluriharmonic functions. Then
\begin{equation}
Q_{\ker }=0.  \label{2019b}
\end{equation}%
Here $Q=Q_{\ker }+Q^{\perp }$. $Q^{\perp }$ is in $(\ker P)^{\perp }$ which
is perpendicular to the kernel of self-adjoint Paneitz operator $P$ in the $%
L^{2}$ norm with respect to the volume form $d\mu $ $=$ $\theta \wedge
d\theta $.
\end{theorem}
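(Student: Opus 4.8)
The plan is to extract Theorem~\ref{t32} from the Bochner identity of Theorem~\ref{t31} by exploiting the fact that the pair $(u,\gamma)$ produced in Lemma~\ref{l32} is not rigid. Recall that the identity of Theorem~\ref{t31} depends on $u$ only through the single term $\tfrac14\int_M(3Q+2Pu)u\,d\mu$, every other term being built from the fixed Hodge-theoretic form $\gamma$. The key observation is: if $f$ is any CR pluriharmonic function, then $P_1f=0$ (this is precisely the characterization of CR pluriharmonicity by the operator $P_1$ of Definition~\ref{d1}), hence also $\overline{P}_1f=0$ and, since $P$ factors through $P_1$ and $\overline{P}_1$, $Pf=0$. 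Consequently, because $2P_1(u+f)=2P_1u$, equation~(\ref{01}) continues to hold verbatim with $u$ replaced by $u+f$ and with the \emph{same} $\gamma$, and the side condition $\gamma_{\overline 1,1}=0$ is untouched. Therefore the conclusion of Theorem~\ref{t31} is valid for $u+f$ in place of $u$.

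Next I would subtract the two instances of Theorem~\ref{t31}, for $u+f$ and for $u$. All torsion and curvature terms cancel identically, leaving
\[
\tfrac14\int_M\left[(3Q+2P(u+f))(u+f)-(3Q+2Pu)u\right]d\mu=0.
\]
Expanding the bracket and using $Pf=0$, it collapses to $3Qf+2(Pu)f$. By self-adjointness of the CR Paneitz operator $\int_M(Pu)f\,d\mu=\int_M u(Pf)\,d\mu=0$, so we are left with $\int_M Qf\,d\mu=0$ for every CR pluriharmonic $f$. (Equivalently, one may feed the one-parameter family $u_t=u+tf$ into Theorem~\ref{t31} and differentiate the resulting identity at $t=0$; since $Pf=0$ this gives $\tfrac14\int_M(3Q+2Pu)f\,d\mu=0$, hence the same conclusion.)

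Finally I would invoke the hypothesis that $\ker P$ consists exactly of the CR pluriharmonic functions. For any $f\in\ker P$ the previous step yields $\int_M Qf\,d\mu=0$, so $Q$ is $L^2(d\mu)$-orthogonal to $\ker P$. Writing $Q=Q_{\ker}+Q^{\perp}$ with $Q_{\ker}\in\ker P$ and $Q^{\perp}\in(\ker P)^{\perp}$, orthogonality of $Q$ to $\ker P$ applied to $f=Q_{\ker}$ gives $\int_M Q_{\ker}^2\,d\mu=\int_M QQ_{\ker}\,d\mu=0$, whence $Q_{\ker}=0$, which is (\ref{2019b}).

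The only place where genuine care is required is the first step: one must make sure that perturbing $u$ by a CR pluriharmonic function preserves \emph{both} identity~(\ref{01}) \emph{and} the constraint $\gamma_{\overline 1,1}=0$ with an unchanged $\gamma$. This is exactly what the normalization of $P_1$ in Definition~\ref{d1} guarantees, via $\ker P_1\subseteq\ker P$; the embeddability of $M$ and the vanishing of $c_1(T_{1,0}M)$ enter only through the availability of Lemma~\ref{l32} and Theorem~\ref{t31}, and everything else in the argument is a one-line integration by parts together with self-adjointness of $P$.
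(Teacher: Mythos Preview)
Your proof is correct and follows essentially the same approach as the paper: the paper also perturbs the function $u$ of Lemma~\ref{l32} by an element of the CR pluriharmonic functions---specifically by $CQ_{\ker}$---observes that equation~(\ref{01}) and hence the Bochner identity~(\ref{2c}) persist with the same $\gamma$, and then concludes $\int_M Q_{\ker}^2\,d\mu=0$ by letting $C\to\pm\infty$. Your version, subtracting two instances of~(\ref{2c}) and specializing to $f=Q_{\ker}$, is a slightly cleaner packaging of the same linear-in-$C$ argument, but the underlying idea is identical.
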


\begin{proof}
Note that it follows from (\cite{l1}) that a real-valued smooth function $u$
is said to be CR-pluriharmonic if, for any point $x\in M$, there is a
real-valued smooth function $v$ such that
\begin{equation}
\overline{\partial }_{b}(u+iv)=0.  \label{20a}
\end{equation}%
Then the equalty (\ref{01}) still holds if we replace $\ u$ by $(u+CQ_{\ker
}).$ It follows from the Bochner-type formula (\ref{2c}) that
\begin{equation*}
\begin{array}{l}
\int_{M}(R-\frac{1}{2}Tor)\left( \gamma ,\gamma \right) d\mu -\frac{1}{2}%
\int_{M}Tor^{\prime }\left( \gamma ,\gamma \right) d\mu \\
+\int_{M}\left\vert \gamma _{1,1}\right\vert ^{2}d\mu +\frac{1}{2}%
\int_{M}\left( Pu\right) ud\mu +\frac{3}{4}\int_{M}Qud\mu +\frac{3}{4}%
C\int_{M}(Q_{\ker })^{2}d\mu \\
=0.%
\end{array}%
\end{equation*}%
However, if $\int_{M}(Q_{\ker })^{2}d\mu $ is not zero, this will lead to a
contradiction by choosing the constant $C<<-1$ or $C>>1.$ Then we are done.
\end{proof}

\section{\textbf{Subelliptic Property for the CR Paneitz Operator}}

Suppose that $(M^{2n+1},\theta ,J)$ is a closed strictly pseudoconvex CR $%
(2n+1)$-manifold. In \cite{bm}, Boutet de Monvel proved that $M$ can be
embedded in $\mathbb{C}^{N}$ for some $N$ if $n\geq 2.$ In the case of $\dim
{M}=3$, D. Burns (\cite{bu}) showed that if the range of $\overline{\partial
}_{b}$ is closed in $L^{2}(M),$ then the Boutet de Monvel's construction
works, and $M$ can be embedded.

Conversely, using a microlocalization method, J. J. Kohn proved that if $%
M^{2n+1}$ is a boundary of a bounded pseudo-convex domain $\Omega \subset
\mathbb{C}^{2n}$ then the range of $\overline{\partial }_{b}$ is closed in $%
L^{2}(M)$ (\cite{k1}, \cite{k2}). In particular, any closed pseudohermitian $%
3$-manifold with vanishing torsion has a tangential CR operator $\overline{%
\partial }_{b}$ which has a closed range in $L^{2}(M)$ (Theorem 2.1 of \cite%
{le}).

\begin{definition}
We call the \textbf{subelliptic }property of CR Paneitz operator $P$ on the
orthogonal complement of $\ker {P}$ in the sense that the following estimate
holds
\begin{equation}
\Vert \lambda \Vert _{S^{k+4,2}}^{2}\leq C_{k}(\Vert P\lambda \Vert
_{S^{k,2}}^{2}+\Vert \lambda \Vert _{L^{2}}^{2})  \label{2020AAA}
\end{equation}%
for some constant $C_{k}$ independent of $\lambda \in (\ker {P})^{\perp }$.
\end{definition}

Next, we recall that if the range of $\overline{\partial }_{b}$ is closed in
$L^{2}(M)$, we have a subelliptic estimate for $\overline{\partial }_{b}$ on
the orthogonal complement of $\ker {\square _{b}}($ \cite{k1}, \cite{k2} ).

\begin{proposition}
\label{p32} Let $(M,J,\theta )$ be a closed strictly pseudoconvex CR $3$%
-manifold. If the range of $\overline{\partial }_{b}$ is closed in $L^{2}(M)$%
, then the sublliptic property of CR Paneitz operator $P$ holds on the
orthogonal complement of $\mathrm{\ker }{P.}$
\end{proposition}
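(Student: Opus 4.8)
The plan is to reduce the fourth‑order estimate $(\ref{2020AAA})$ for $P$ to the second‑order subelliptic estimates for the Kohn Laplacian $\square _{b}$ and its conjugate $\overline{\square }_{b}=2(\partial _{b}\partial _{b}^{\ast }+\partial _{b}^{\ast }\partial _{b})$ (so that on functions $\square _{b}=2\overline{\partial }_{b}^{\ast }\overline{\partial }_{b}$ and $\overline{\square }_{b}=2\partial _{b}^{\ast }\partial _{b}$), together with a matching of kernels. The first ingredient is an algebraic identity: using the local formula $P_{1}\varphi =\varphi _{\bar 1}{}^{\bar 1}{}_{1}+iA_{11}\varphi ^{1}$ of Definition \ref{d1}, the relations $\square _{b}\varphi =-2\varphi _{\bar 1}{}^{\bar 1}$ and $\overline{\square }_{b}\varphi =-2\varphi _{1}{}^{1}$ on functions, and the Webster commutation rules used in Lemmas \ref{l31}--\ref{l32}, one checks
\begin{equation*}
P=\tfrac12\bigl(\square _{b}\overline{\square }_{b}+\overline{\square }_{b}\square _{b}\bigr)+R_{3}\qquad \bigl(\text{equivalently } P=\Delta _{b}^{2}+T^{2}+R_{3}\bigr),
\end{equation*}
where $R_{3}$ is a differential operator of Folland--Stein weight $\le 3$ whose coefficients are polynomials in $R$, $A_{11}$ and their covariant derivatives; the same bookkeeping gives that $[\square _{b},\overline{\square }_{b}]$ has weight $\le 3$. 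Rewriting this as $\square _{b}\overline{\square }_{b}\lambda =P\lambda -R_{3}\lambda +\tfrac12[\square _{b},\overline{\square }_{b}]\lambda$ shows that $\square _{b}\overline{\square }_{b}\lambda$ agrees with $P\lambda$ modulo terms of weight $\le 3$ in $\lambda$.

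The second ingredient is the inclusion $(\ker P)^{\perp }\subseteq (\ker \square _{b})^{\perp }\cap (\ker \overline{\square }_{b})^{\perp }$, complements taken in $L^{2}(M,\mathbb{C})$. Since $P_{1}\varphi =0$ implies $P\varphi =0$, we have $\ker P\supseteq \ker P_{1}\supseteq \{\mathrm{Re}\,h,\ \mathrm{Im}\,h:\ h\text{ a smooth CR function}\}$. Hence if a real $\lambda$ satisfies $\lambda \perp \ker P$, then $\int _{M}\lambda \,h\,d\mu =0$ for every smooth CR $h$; as closed range of $\overline{\partial }_{b}$ forces $M$ to be embeddable (Burns), smooth CR functions are dense in $\ker \square _{b}$, so $\lambda \perp \ker \square _{b}$, and since $\lambda$ is real this also gives $\lambda \perp \ker \overline{\square }_{b}$.

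Now I would run the estimate. Under the closed‑range hypothesis, Kohn's maximal subelliptic estimate gives, for every $m\ge 0$,
\begin{equation*}
\|u\|_{S^{m+2,2}}^{2}\le C_{m}\bigl(\|\square _{b}u\|_{S^{m,2}}^{2}+\|u\|_{L^{2}}^{2}\bigr),\qquad u\in (\ker \square _{b})^{\perp },
\end{equation*}
and the conjugate estimate for $\overline{\square }_{b}$ on $(\ker \overline{\square }_{b})^{\perp }$; moreover the Szeg\H{o}-type projections $\Pi $ onto $\ker \square _{b}$ and $\overline{\Pi }$ onto $\ker \overline{\square }_{b}$ are operators of order $0$, bounded on every $S^{m,2}$. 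Given $\lambda \in (\ker P)^{\perp }$, apply the $\overline{\square }_{b}$-estimate to $\lambda$ to get $\|\lambda \|_{S^{k+4,2}}^{2}\lesssim \|\overline{\square }_{b}\lambda \|_{S^{k+2,2}}^{2}+\|\lambda \|_{L^{2}}^{2}$, and split $\overline{\square }_{b}\lambda =(I-\Pi )\overline{\square }_{b}\lambda +\Pi \overline{\square }_{b}\lambda$. For the first summand, $(I-\Pi )\overline{\square }_{b}\lambda \in (\ker \square _{b})^{\perp }$ and $\square _{b}(I-\Pi )\overline{\square }_{b}\lambda =\square _{b}\overline{\square }_{b}\lambda $, so the $\square _{b}$-estimate and the identity above yield $\|(I-\Pi )\overline{\square }_{b}\lambda \|_{S^{k+2,2}}\lesssim \|P\lambda \|_{S^{k,2}}+\|\lambda \|_{S^{k+3,2}}$. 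For the second summand, $\Pi \lambda =0$ gives $\Pi \overline{\square }_{b}\lambda =[\Pi ,\overline{\square }_{b}](I-\Pi )\lambda $, and since $\overline{\square }_{b}$ is microlocally invertible along the characteristic ray on which the range of $\Pi $ is microlocally concentrated (this ray being disjoint from the one for $\ker \overline{\square }_{b}$), the commutator $[\Pi ,\overline{\square }_{b}]$ has weight $\le 1$, so $\|\Pi \overline{\square }_{b}\lambda \|_{S^{k+2,2}}\lesssim \|\lambda \|_{S^{k+3,2}}$. Collecting terms,
\begin{equation*}
\|\lambda \|_{S^{k+4,2}}^{2}\lesssim \|P\lambda \|_{S^{k,2}}^{2}+\|\lambda \|_{S^{k+3,2}}^{2}+\|\lambda \|_{L^{2}}^{2},
\end{equation*}
and Folland--Stein interpolation $\|\lambda \|_{S^{k+3,2}}^{2}\le \varepsilon \|\lambda \|_{S^{k+4,2}}^{2}+C_{\varepsilon }\|\lambda \|_{L^{2}}^{2}$ lets one absorb the middle term, giving $(\ref{2020AAA})$. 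The estimate is first proved for $\lambda$ already known to lie in $S^{k+4,2}$; the general case follows by induction on $k$ using hypoellipticity of $P$ on $(\ker P)^{\perp }$, or by a mollification argument.

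The step I expect to be the real obstacle is the control of the ``kernel component'' $\Pi \overline{\square }_{b}\lambda $ — equivalently, seeing that orthogonality to $\ker P$ already tames the directions in which $P=\Delta _{b}^{2}+T^{2}+\cdots$ degenerates (the two characteristic rays, on each of which $P$ behaves only like the second‑order operator $T^{2}$). This is exactly where closed range of $\overline{\partial }_{b}$ enters twice: once through the maximal subelliptic estimates for $\square _{b}$ and $\overline{\square }_{b}$ on the orthogonal complements of their kernels, and once through the mapping properties of the Szeg\H{o} projections, which make $[\Pi ,\overline{\square }_{b}]$ and $[\overline{\Pi },\square _{b}]$ gain a derivative. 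Making the weight bookkeeping in $R_{3}$, in $[\square _{b},\overline{\square }_{b}]$, and in the commutators with $\Pi ,\overline{\Pi }$ precise, and then checking the interpolation/absorption and the inductive gain of regularity, is the remaining routine but careful work.
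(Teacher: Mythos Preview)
Your approach is correct in outline and reaches the same conclusion, but it differs from the paper's in one substantive respect: where you split $\overline{\square}_{b}\lambda$ with the Szeg\H{o} projection $\Pi$ and invoke the Heisenberg pseudodifferential calculus to say that $[\Pi,\overline{\square}_{b}]$ gains a derivative, the paper avoids the Szeg\H{o} projection entirely. It uses the exact factorisation $P=\overline{\square}_{b}\square_{b}-2S$ with $S$ a genuine differential operator of weight $2$ (not the symmetrised form $P=\tfrac12(\square_{b}\overline{\square}_{b}+\overline{\square}_{b}\square_{b})+R_{3}$ you wrote). Given $\lambda\in(\ker P)^{\perp}$, it writes $\lambda=\overline{\square}_{b}\lambda'$ with $\lambda'\in(\ker\overline{\square}_{b})^{\perp}$ via the partial inverse coming from closed range, checks directly that $\square_{b}\lambda-2S\lambda'\in(\ker\overline{\square}_{b})^{\perp}$, and then chains the two second-order subelliptic estimates: $\|\lambda\|_{S^{4,2}}\lesssim\|\square_{b}\lambda\|_{S^{2,2}}\sim\|\square_{b}\lambda-2S\lambda'\|_{S^{2,2}}\lesssim\|\overline{\square}_{b}(\square_{b}\lambda-2S\lambda')\|_{L^{2}}\lesssim\|\overline{\square}_{b}\square_{b}\lambda\|_{L^{2}}+\|\lambda\|_{L^{2}}$.

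What each buys: the paper's route is strictly more elementary---it needs only Kohn's subelliptic estimates for $\square_{b}$ and $\overline{\square}_{b}$ on the complements of their kernels, plus the bound $\|S\lambda'\|_{S^{2,2}}\lesssim\|\lambda'\|_{S^{4,2}}\lesssim\|\lambda\|_{S^{2,2}}$, and no microlocal analysis of $\Pi$. Your route is heavier but more robust: once one has the Heisenberg calculus in hand, the commutator-with-$\Pi$ mechanism would handle more general fourth-order operators built from $\square_{b},\overline{\square}_{b}$, not just the specific $P$. One caution on your write-up: saying $[\Pi,\overline{\square}_{b}]$ ``has weight $\le1$'' is a slight abuse, since $\Pi$ is not differential; what you need (and what is true in the Beals--Greiner/Taylor calculus) is that it has Heisenberg order $\le1$, hence maps $S^{m,2}\to S^{m-1,2}$. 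If you want to match the paper's level of machinery, replace the Szeg\H{o}-projection step by the partial-inverse trick above.
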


\proof
Since the Kohn Laplacian (acting on functions) satisfies $\square
_{b}=-\Delta +iT$, we have
\begin{equation*}
\Delta _{b}^{2}+T^{2}=\overline{\square }_{b}\square _{b}-i[\Delta
_{b},T]=\square _{b}\overline{\square }_{b}+i[\Delta _{b},T].
\end{equation*}%
Moreover, since it holds that $i[\Delta _{b},T]\lambda =2i(A_{\overline{1}%
\overline{1}}\lambda ^{\overline{1}})^{,\overline{1}}+2i(A_{11}\lambda
^{1})^{,1}$, the CR Paneitz operator can be written (\cite{gl}) by
\begin{equation*}
P\lambda =\overline{\square }_{b}\square _{b}\lambda -2S\lambda =\square _{b}%
\overline{\square }_{b}\lambda -2\overline{S}\lambda ,
\end{equation*}%
where $S\lambda =2i(A_{\overline{1}\overline{1}}\lambda ^{\overline{1}})^{,%
\overline{1}}.$

We observe that the kernel of $P_{1}$ is just the space of all CR
pluriharmonic functions (see definition \ref{d1}). Let $w =u+iv\in \ker
\square _{b},\ u, v\in C^{\infty }(M;\mathbb{R})$. By Lemma 3.1. of \cite{l1}%
,
\begin{equation*}
P_{1}u=P_{1}v=0\ \ \mathrm{and\ \ }\ u,v\in \ker {P}.
\end{equation*}%
Therefore if $\lambda \in (\ker {P})^{\perp }\cap C^{\infty }(M;\mathbb{R}),$
then it holds that
\begin{equation*}
(\lambda ,w )_{L^{2}}=(\lambda ,u)_{L^{2}}+i(\lambda ,v)_{L^{2}}=0.
\end{equation*}%
Since the CR Paneitz operator is a real operator, i.e. $\overline{P}%
\overline{\lambda }=\overline{P\lambda },$ we have $\ker {P}=\{\lambda
+i\eta \mid \lambda ,\eta \in \ker {P}\cap C^{\infty }(M;\mathbb{R})\}.$
This concludes that
\begin{equation*}
(\ker {P})^{\perp }\subset (\ker {\square _{b}})^{\perp }.
\end{equation*}

Let $\lambda \in (\ker {P})^{\perp }\cap C^{\infty }(M;\mathbb{R})$. Then
\begin{equation*}
\lambda \in (\ker {\square _{b}})^{\perp }\cap (\ker {\overline{\square }_{b}%
})^{\perp }
\end{equation*}%
so that by the closedness of the range of $\square _{b}$, there exists $%
\lambda ^{\prime }\in (\ker {\overline{\square }_{b}})^{\perp }\subset
C^{\infty }(M;\mathbb{C})$ such that $\lambda =\overline{\square }%
_{b}\lambda ^{\prime }.$ Since $\ker {\overline{\square }_{b}}\subset \ker {P%
},$ if $w \in \ker {\overline{\square }_{b},}$ then we have
\begin{equation*}
(\square \lambda ,w )_{L^{2}}=(\lambda ^{\prime },\overline{\square }%
_{b}\square _{b}w )_{L^{2}}=(\lambda ^{\prime },2Sw )_{L^{2}}.
\end{equation*}%
In particular%
\begin{equation}
\square _{b}\lambda -2S\lambda ^{\prime }\in (\ker {\overline{\square }_{b}}%
)^{\perp }.  \label{2011}
\end{equation}%
Moreover, since $\lambda ^{\prime }\in (\ker {\overline{\square }_{b}}%
)^{\perp }$,
\begin{equation*}
\Vert S\lambda ^{\prime }\Vert _{L^{2}}\leq C\Vert \lambda ^{\prime }\Vert
_{S^{2,2}}\leq C^{\prime }\Vert \lambda \Vert _{L^{2}}.
\end{equation*}%
Therefore
\begin{eqnarray}
\Vert \square _{b}\lambda -2S\lambda ^{\prime }\Vert _{S^{2,2}}^{2} &\leq
&2(\Vert \square _{b}\lambda \Vert _{S^{2,2}}^{2}+4\Vert S\lambda ^{\prime
}\Vert _{S^{2,2}}^{2})  \notag \\
{} &\leq &C^{\prime }(\Vert \square _{b}\lambda \Vert _{S^{2,2}}^{2}+\Vert
\lambda \Vert _{S^{2,2}}^{2})  \notag \\
{} &\leq &C^{\prime \prime }\Vert \square _{b}\lambda \Vert _{S^{2,2}}^{2}.
\end{eqnarray}%
Similarly%
\begin{equation*}
\Vert \square _{b}\lambda \Vert _{S^{2,2}}^{2}\leq C\Vert \square
_{b}\lambda -2S\lambda ^{\prime }\Vert _{S^{2,2}}^{2}.
\end{equation*}

It follow from (\ref{2011}) and the subelliptic estimates for $\square _{b},%
\overline{\square }_{b}$ on their orthogonal complement of the kernel, for $%
\lambda \in (\ker {P})^{\perp }$
\begin{eqnarray*}
\Vert \lambda \Vert _{S^{4,2}} &\leq &C_{1}\Vert \square _{b}\lambda \Vert
_{S^{2,2}}\leq C_{2}\Vert \square _{b}\lambda -2S\lambda ^{\prime }\Vert
_{S^{2,2}} \\
&\leq &C_{3}\Vert \overline{\square }_{b}(\square _{b}\lambda -2S\lambda
^{\prime })\Vert _{L^{2}} \\
&\leq &C_{4}(\Vert \overline{\square }_{b}\square _{b}\lambda \Vert
_{L^{2}}+\Vert \overline{\square }_{b}S\lambda ^{\prime }\Vert _{L^{2}}) \\
&\leq &C_{5}(\Vert \overline{\square }_{b}\square _{b}\lambda \Vert
_{L^{2}}+\Vert \lambda \Vert _{L^{2}}).
\end{eqnarray*}

Hence $\overline{\square }_{b}\square _{b}$ has the subellipticity on $(\ker
{P})^{\perp }.$ Since $P=\overline{\square }_{b}\square _{b}-2S$ and $S$ is
a lower order operator, we obtain the subelliptic property (\ref{2020AAA})
for $P$ on $(\ker {P})^{\perp }$.
\endproof%

As a consequence of Proposition \ref{p32} and results of J. J. Kohn (\cite%
{k1}, \cite{k2}), one obtains

\begin{corollary}
\label{c31} Let $(M,\theta ,J)$ be a closed embeddable strictly pseudoconvex
CR $3$-manifold. Then the subelliptic property (\ref{2020AAA}) of CR Paneitz
operator $P$ holds on the orthogonal complement of $\mathrm{\ker }{P.}$
\end{corollary}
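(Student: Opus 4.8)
The plan is to reduce the statement to Proposition \ref{p32}. That proposition already shows that the subelliptic estimate (\ref{2020AAA}) for $P$ on $(\ker P)^{\perp }$ follows as soon as the range of $\overline{\partial}_{b}$ is closed in $L^{2}(M)$, so the only thing that remains is to verify this closed-range property for a closed embeddable strictly pseudoconvex CR $3$-manifold.

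To do so, I would appeal to the results of J. J. Kohn (\cite{k1}, \cite{k2}) together with the Boutet de Monvel--Burns picture recalled at the beginning of this section. If $(M,J)$ is embedded as a strictly pseudoconvex CR hypersurface in some $\mathbb{C}^{N}$, then $M$ bounds a (possibly singular) Stein space $\Omega$ --- this is the Harvey--Lawson filling, and it is precisely the object used in Burns' proof of the converse implication (\cite{bu}). Kohn's microlocal analysis of $\square_{b}$ on the boundary of a pseudoconvex domain then yields that the range of $\overline{\partial}_{b}$ is closed in $L^{2}(M)$; equivalently, one simply quotes that for compact strictly pseudoconvex CR $3$-manifolds embeddability is equivalent to closedness of the range of $\overline{\partial}_{b}$, the implication we need being exactly the content of \cite{k1}, \cite{k2}. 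Feeding this into Proposition \ref{p32} concludes the argument.

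The one point that needs care is that the Stein space $\Omega$ filling an embedded $3$-dimensional $M$ may be singular, so Kohn's closed-range theorem has to be invoked in the form valid for boundaries of singular pseudoconvex Stein spaces (or one first passes to a normalization/resolution, as in Burns' treatment of the converse direction). I expect this to be the only real obstacle; once it is dealt with, the hypothesis of Proposition \ref{p32} is met and the corollary is immediate.
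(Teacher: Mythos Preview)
Your proposal is correct and follows essentially the same approach as the paper: the corollary is stated there simply as a consequence of Proposition~\ref{p32} together with Kohn's closed-range results \cite{k1}, \cite{k2}. Your additional care about the possibly singular Stein filling is a valid technical point that the paper itself glosses over, but it does not change the argument.
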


\section{\textbf{A Priori Uniformly Estimates }}

Let $T$ \ be the maximal time for a solution of the flow (\ref{2}) on $%
M\times \lbrack 0,T)$. We will derive the uniformly $S^{4k,2}$-norm estimate
for $\lambda $ under the flow (\ref{2}) for all $t\geq 0$. It then follows
that we have the long-time existence and asymptotic convergence for
solutions of (\ref{2}) on $M\times \lbrack 0,\infty )$ as in section $5$.

First we recall a pseudohermitian Moser inequality. Let $\Vert \cdot \Vert
_{p}$ denote the $L^{p}$-norm with respect to the volume form $d\mu _{0}$.

\begin{lemma}
\label{l3.2} (\cite{ccc}) Let $(M,J,\theta _{0})$ be a closed strictly
pseudoconvex CR $3$-manifold. Then there exist constants $C,$ $\varkappa ,$
and $\nu $ such that for all $\varphi \in C^{\infty }(M)$, there holds
\begin{equation}
\displaystyle\int_{M}e^{\varphi }\ d\mu _{0}\leq C\exp {\left( \varkappa
\left\Vert \overset{\circ }{\nabla }_{b}\varphi \right\Vert _{4}^{4}+\nu
\left\Vert \varphi \right\Vert _{4}^{4}\right) .}  \label{moine}
\end{equation}
\end{lemma}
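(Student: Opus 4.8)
The plan is to recognize \eqref{moine} as the pseudohermitian form of the \emph{borderline} Moser--Trudinger inequality: the Folland--Stein (homogeneous) dimension of a closed CR $3$-manifold is $Q=4$, which is exactly the power appearing on $\|\overset{\circ}{\nabla}_b\varphi\|_4$, so one is sitting precisely in the critical Sobolev regime $S^{1,4}$. First I would reduce \eqref{moine} to an exponential-integrability estimate for functions of zero average. Writing $V_0=\int_M d\mu_0$, $\bar\varphi=V_0^{-1}\int_M\varphi\,d\mu_0$ and $\psi:=\varphi-\bar\varphi$, Hölder gives $|\bar\varphi|\le V_0^{-1/4}\|\varphi\|_4$, hence $e^{\bar\varphi}\le C_\nu\exp(\nu\|\varphi\|_4^4)$ for any $\nu>0$ (using $t\le\nu t^4+C_\nu$); also $\|\overset{\circ}{\nabla}_b\varphi\|_4=\|\nabla_b\psi\|_4$ since the subgradient annihilates the constant $\bar\varphi$. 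So it suffices to show $\int_M e^{\psi}\,d\mu_0\le C\exp(\varkappa\|\nabla_b\psi\|_4^4)$ whenever $\int_M\psi\,d\mu_0=0$.

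The core step is the following Moser--Trudinger inequality, which is essentially the content of \cite{ccc}: there exist $\alpha_0>0$, $C_0$ such that
\[
\int_M\exp\!\left(\alpha_0\,\frac{|\psi|^{4/3}}{\|\nabla_b\psi\|_4^{4/3}}\right)d\mu_0\le C_0
\]
for every nonconstant $\psi\in C^\infty(M)$ with $\int_M\psi\,d\mu_0=0$. I would prove it by Folland--Stein potential theory. Let $G(x,y)$ be the Green function of the sub-Laplacian $\Delta_{b,0}$ of $(M,J,\theta_0)$, so $\psi(x)=-\int_M G(x,y)\Delta_{b,0}\psi(y)\,d\mu_0(y)$; integrating by parts once and invoking the kernel bounds $|G(x,y)|\lesssim d_{cc}(x,y)^{2-Q}=d_{cc}(x,y)^{-2}$ and $|\nabla_{b,y}G(x,y)|\lesssim d_{cc}(x,y)^{1-Q}=d_{cc}(x,y)^{-3}$ ($d_{cc}$ the Carnot--Carathéodory distance of $(M,\theta_0)$), one dominates $|\psi(x)|$ by the order-one Folland--Stein potential $I_1(|\nabla_b\psi|)(x)=\int_M d_{cc}(x,y)^{-3}|\nabla_b\psi(y)|\,d\mu_0(y)$. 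The borderline $L^4\!\to\!\exp(L^{4/3})$ bound for $I_1$ is then obtained as on the Heisenberg group (Cohn--Lu, Fontana): split the kernel into $d_{cc}\le\rho$ and $d_{cc}>\rho$, optimize in $\rho$, and sum over the dyadic level sets of $|\psi|$, using the volume growth $|B_r|\asymp r^4$; a finite cover of $M$ by Heisenberg-type charts with a subordinate partition of unity transplants the flat estimate to $M$, the cutoff errors being of lower differentiation weight and hence controlled by $\|\psi\|_{L^4}$ alone.

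Granting this, I would finish \eqref{moine} by a one-line Young's inequality argument. With $s:=\|\nabla_b\psi\|_4>0$, applying Young with conjugate exponents $4/3$ and $4$ to the product $\psi=(\psi/s)\cdot s$ gives, pointwise,
\[
\psi\ \le\ \frac{\alpha_0}{2}\,\frac{|\psi|^{4/3}}{s^{4/3}}\ +\ c\,s^{4}
\]
for a fixed $c=c(\alpha_0)$. Exponentiating, integrating, and using $e^{\frac{\alpha_0}{2}t}\le e^{\alpha_0 t}$ for $t\ge0$ together with the Moser--Trudinger inequality above,
\[
\int_M e^{\psi}\,d\mu_0\ \le\ e^{cs^{4}}\int_M\exp\!\left(\alpha_0\,\frac{|\psi|^{4/3}}{s^{4/3}}\right)d\mu_0\ \le\ C_0\,e^{c\,\|\nabla_b\psi\|_4^{4}},
\]
the degenerate case $s=0$ being trivial. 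Combining with the first paragraph gives \eqref{moine} with $\varkappa=c$ and a suitable $\nu$.

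The main obstacle is, as expected, the Moser--Trudinger inequality itself: at the critical exponent $p=Q=4$ one cannot use Hölder or the Folland--Stein--Sobolev embedding, because $S^{1,4}\hookrightarrow L^q$ has norm growing like $q^{3/4}$, and it is precisely this growth rate---summed correctly over level sets---that yields exponential integrability of $|\psi|^{4/3}$ and nothing stronger. The two genuinely delicate points are (i) establishing the sharp Folland--Stein bounds for the Green function of $\Delta_{b,0}$ on the manifold, rather than merely for the explicit Heisenberg kernel, and (ii) arranging the localization so that the partition-of-unity errors cost only the lower-order term $\nu\|\varphi\|_4^4$ in \eqref{moine}. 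Both are carried out in \cite{ccc}, which I would cite for the sharp statement, recording the Cohn--Lu-type argument above only to indicate the mechanism.
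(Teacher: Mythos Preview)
The paper does not give a proof of this lemma at all: it is simply quoted from \cite{ccc} and then used as a black box in the proof of Proposition~\ref{p3.1}. So there is no ``paper's own proof'' to compare against here.

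That said, your sketch is a correct and standard route to such an inequality. The reduction to zero-average functions, the identification of the critical Folland--Stein exponent $Q=4$ (so that the Moser--Trudinger exponent is $Q/(Q-1)=4/3$), the potential-theoretic proof of the borderline exponential integrability via the Green function of $\Delta_{b,0}$ and the Cohn--Lu/Fontana-type argument on the Heisenberg group localized by charts, and the final Young-inequality step are all sound. One small cosmetic point: your phrasing ``applying Young with conjugate exponents $4/3$ and $4$ to the product $\psi=(\psi/s)\cdot s$'' is a bit glib---what you actually use is $|\psi|=(\epsilon|\psi|/s)\cdot(s/\epsilon)\le \tfrac{3}{4}\epsilon^{4/3}|\psi|^{4/3}/s^{4/3}+\tfrac{1}{4}s^4/\epsilon^4$ with $\epsilon$ chosen so that the first coefficient is $\alpha_0/2$---but the conclusion is right. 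Since the lemma is cited rather than proved in the present paper, a one-line reference to \cite{ccc} (and perhaps \cite{cl} for the Heisenberg model case) would be entirely adequate; your fuller sketch is more than is required.
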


\begin{lemma}
\label{l3.3} Let $(M,J,[\theta _{0}])$ be a closed strictly pseudoconvex CR $%
3$-manifold. Let $\lambda $ be a solution of the flow (\ref{2}) on $M\times
\lbrack 0,T)$. Then there exists a positive constant $\beta =\beta
(Q_{0},\theta _{0})$ such that%
\begin{equation}
\mathcal{E}\emph{(}\theta \emph{)}=\displaystyle\int_{M}P_{0}\lambda \cdot
\lambda d\mu _{0}+\displaystyle\int_{M}Q_{0}\lambda d\mu _{0}\leq \beta ^{2},
\label{9}
\end{equation}%
for all $t\geq 0.$
\end{lemma}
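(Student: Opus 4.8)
The plan is to use the gradient-flow structure of \eqref{2}. First I would compute $\frac{d}{dt}\mathcal{E}(\theta)$ along the flow. Differentiating \eqref{1a} and using that $P_0$ is self-adjoint together with $Q=e^{-4\lambda}(Q_0+2P_0\lambda)$, one finds
\begin{equation*}
\frac{d}{dt}\mathcal{E}(\theta)=2\int_M (P_0\lambda)\,\dot\lambda\,d\mu_0+\int_M Q_0\,\dot\lambda\,d\mu_0=\int_M (Q_0+2P_0\lambda)\,\dot\lambda\,d\mu_0.
\end{equation*}
By Theorem \ref{t32}, the hypothesis $c_1(T_{1,0}M)=0$ and $\ker P_0$ consisting of CR-pluriharmonic functions give $(Q_0)_{\ker}=0$, hence $Q_0=Q_0^{\perp}$ and therefore $Q_0+2P_0\lambda=Q_0^{\perp}+2P_0\lambda$. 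Since $\dot\lambda=-(Q_0^{\perp}+2P_0\lambda)+r$ with $r$ constant, and $\int_M(Q_0^{\perp}+2P_0\lambda)\,d\mu_0\cdot r$ must be handled carefully: here $d\mu_0$ is fixed but $r$ is defined with $d\mu$; I would instead observe that $\int_M \dot\lambda\, \cdot\,(\text{const})\,d\mu$ pairs against the flow in the normalized volume form. The cleanest route is to note that $Q_0^\perp+2P_0\lambda$ has zero $\ker P_0$-component, so $\int_M(Q_0^\perp+2P_0\lambda)\cdot(\text{const})\,d\mu_0=0$ because constants lie in $\ker P_0$; thus the $r$-term drops and
\begin{equation*}
\frac{d}{dt}\mathcal{E}(\theta)=-\int_M (Q_0^{\perp}+2P_0\lambda)^2\,d\mu_0\leq 0,
\end{equation*}
which is the formula quoted after \eqref{1a}.

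Next I would conclude monotonicity: $\mathcal{E}(\theta(t))\leq \mathcal{E}(\theta(0))=\mathcal{E}(\theta_0)$ for all $t\in[0,T)$ (using the normalization $\int_M e^{4\lambda_0}d\mu_0=\int_M d\mu_0$ in the initial data, which pins down $\mathcal{E}(\theta_0)$ as a fixed number depending only on $Q_0,\theta_0,\lambda_0$). Since the statement asks for a bound $\beta^2=\beta(Q_0,\theta_0)^2$ independent of the particular solution beyond its initial data — and in the intended application $\lambda_0$ is itself normalized — I would set $\beta^2:=\mathcal{E}(\theta_0)$ (taking $\beta$ to be its nonnegative square root, after checking $\mathcal{E}(\theta_0)\geq 0$, which follows if $\mathcal{E}$ is bounded below, see below). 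This gives \eqref{9} immediately.

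The remaining point — and the main obstacle — is to see that $\mathcal{E}(\theta_0)$ is genuinely a finite constant of the form $\beta(Q_0,\theta_0)^2$, i.e. that the functional is bounded below and the bound depends only on background data. For this I would invoke the pseudohermitian Moser inequality, Lemma \ref{l3.2}: writing $\mathcal{E}(\theta)=\int_M P_0\lambda\cdot\lambda\,d\mu_0+\int_M Q_0\lambda\,d\mu_0$ and using nonnegativity (indeed essential positivity on $(\ker P_0)^\perp$) of $P_0$ together with $(Q_0)_{\ker}=0$, one controls $\int_M Q_0\lambda\,d\mu_0=\int_M Q_0\lambda^\perp\,d\mu_0$ by $\varepsilon\|\lambda^\perp\|_{S^{2,2}}^2+C_\varepsilon\|Q_0\|^2$ and absorbs the first term into $\int_M P_0\lambda\cdot\lambda\,d\mu_0$ via \eqref{2020AAA}; this shows $\mathcal{E}$ is bounded below on $[\theta_0]$ by a constant depending only on $Q_0,\theta_0$. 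Combining the lower bound with the monotonicity $\mathcal{E}(\theta(t))\leq\mathcal{E}(\theta_0)$ yields a two-sided control, and relabelling the positive constant as $\beta^2$ completes the proof. The subtlety to watch is the interplay between the fixed volume form $d\mu_0$ appearing in $\mathcal{E}$ and the evolving $d\mu$ appearing in $r$; the key is precisely that $r$ is a constant and constants are annihilated by $P_0$ and orthogonal to $Q_0^\perp$, so no $d\mu$-versus-$d\mu_0$ discrepancy actually enters the dissipation identity.
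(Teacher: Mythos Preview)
Your core computation of the dissipation identity $\frac{d}{dt}\mathcal{E}(\theta)=-\int_M(Q_0^\perp+2P_0\lambda)^2\,d\mu_0$ is the same as the paper's, and monotonicity then gives the upper bound $\mathcal{E}(\theta(t))\le\mathcal{E}(\theta(0))=:\beta^2$, exactly as the paper does.

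There is, however, a genuine gap in how you justify replacing $Q_0$ by $Q_0^\perp$. You invoke Theorem~\ref{t32} to conclude $(Q_0)_{\ker}=0$, but that theorem requires embeddability, $c_1(T_{1,0}M)=0$, and that $\ker P_0$ consist of CR-pluriharmonic functions---none of which are hypotheses of Lemma~\ref{l3.3}, which is stated for an arbitrary closed strictly pseudoconvex CR $3$-manifold. The paper avoids this entirely: since $P_0$ is self-adjoint, $P_0\lambda\in(\ker P_0)^\perp$, so $Q_0^\perp+2P_0\lambda\in(\ker P_0)^\perp$ and hence $\int_M(Q_0)_{\ker}(Q_0^\perp+2P_0\lambda)\,d\mu_0=0$ by orthogonality, regardless of whether $(Q_0)_{\ker}$ vanishes. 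Likewise the $r$-term against $Q_0$ drops because $\int_M Q_0\,d\mu_0=0$ (the CR $Q$-curvature is a divergence), not because $Q_0=Q_0^\perp$. With these two observations the computation goes through under the lemma's stated hypotheses.

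Finally, your discussion of a lower bound for $\mathcal{E}$ via the Moser inequality and subellipticity is unnecessary here: the lemma only asserts an upper bound, and one may simply take $\beta^2:=\max\{\mathcal{E}(\theta(0)),1\}$. The lower-bound machinery belongs to Proposition~\ref{p3.1}, not this lemma.
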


\proof
\ It follow from (\ref{1b}) and (\ref{0b}) that
\begin{equation}
\begin{array}{l}
\displaystyle\frac{d}{dt}\mathcal{E}\emph{(}\theta \emph{)} \\[8pt]
=2\displaystyle\int_{M}P_{0}\lambda \cdot \frac{\partial \lambda }{\partial t%
}d\mu {0}+\displaystyle\int_{M}Q_{0}\frac{\partial \lambda }{\partial t}d\mu
_{0} \\[8pt]
=-2\displaystyle\int_{M}(Q_{0}^{\bot }+2P_{0}\lambda -r(t))P_{0}\lambda d\mu
_{0}-\displaystyle\int_{M}Q_{0}(Q_{0}^{\bot }+2P_{0}\lambda -r(t))d\mu _{0}
\\[8pt]
=-2\displaystyle\int_{M}(Q_{0}^{\bot }+2P_{0}\lambda )P_{0}\lambda d\mu _{0}-%
\displaystyle\int_{M}Q_{0}(Q_{0}^{\bot }+2P_{0}\lambda )d\mu _{0} \\[8pt]
=-2\displaystyle\int_{M}(Q_{0}^{\bot }+2P_{0}\lambda )P_{0}\lambda d\mu _{0}-%
\displaystyle\int_{M}Q_{0}^{\bot }(Q_{0}^{\bot }+2P_{0}\lambda )d\mu _{0} \\%
[8pt]
=-\displaystyle\int_{M}(Q_{0}^{\bot }+2P_{0}\lambda )^{2}d\mu _{0}.%
\end{array}
\label{2d}
\end{equation}%
\endproof%

We also observe that

\begin{lemma}
\label{ODE} Let $f$ $:[0,T)\rightarrow \mathbb{R}$ be a $C^{1}$ smooth
function satisfying%
\begin{equation*}
f^{\prime }\leq -L_{1}f+L_{2}
\end{equation*}%
for some positive constants $L_{1},$ $L_{2}>0.$ Then
\begin{equation*}
f(t)\leq f(0)e^{-L_{1}t}+\frac{L_{2}}{L_{1}}
\end{equation*}%
for $t$ $\in $ $[0,T).$
\end{lemma}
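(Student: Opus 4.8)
The plan is to use the standard integrating-factor trick for linear first-order differential inequalities. First I would introduce the auxiliary function $g(t) = e^{L_1 t} f(t)$, which is $C^1$ on $[0,T)$, and compute its derivative. Using the product rule and the hypothesis $f'(t) \leq -L_1 f(t) + L_2$, I obtain
\begin{equation*}
g'(t) = e^{L_1 t}\bigl(f'(t) + L_1 f(t)\bigr) \leq e^{L_1 t} L_2.
\end{equation*}
The point of multiplying by $e^{L_1 t}$ is precisely that it cancels the $-L_1 f$ term and leaves a right-hand side that can be integrated explicitly.

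Next I would integrate this inequality from $0$ to $t$ for any fixed $t \in [0,T)$. Since $g'$ is dominated pointwise by $e^{L_1 s} L_2$, integration preserves the inequality and gives
\begin{equation*}
g(t) - g(0) = \int_0^t g'(s)\, ds \leq L_2 \int_0^t e^{L_1 s}\, ds = \frac{L_2}{L_1}\bigl(e^{L_1 t} - 1\bigr).
\end{equation*}
Here I use that $L_1 > 0$ so the elementary antiderivative $\tfrac{1}{L_1} e^{L_1 s}$ is valid and the expression makes sense. Recalling $g(0) = f(0)$ and $g(t) = e^{L_1 t} f(t)$, this reads $e^{L_1 t} f(t) \leq f(0) + \tfrac{L_2}{L_1}(e^{L_1 t} - 1)$.

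Finally I would divide through by the strictly positive factor $e^{L_1 t}$, which preserves the inequality, to conclude
\begin{equation*}
f(t) \leq f(0) e^{-L_1 t} + \frac{L_2}{L_1}\bigl(1 - e^{-L_1 t}\bigr) \leq f(0) e^{-L_1 t} + \frac{L_2}{L_1},
\end{equation*}
where the last step just drops the nonpositive term $-\tfrac{L_2}{L_1} e^{-L_1 t}$. This is exactly the claimed bound. There is no real obstacle here: the statement is an elementary Gronwall-type estimate, and the only points requiring a word of care are that $L_1 > 0$ (so that division by $e^{L_1 t}$ and the exponential antiderivative behave well) and that the differential inequality is only assumed, not an equality — but since we only integrate an upper bound on $g'$, monotonicity of the integral handles this cleanly.
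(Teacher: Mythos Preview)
Your proof is correct; this is precisely the standard integrating-factor argument for a Gronwall-type inequality. The paper itself does not give a proof of this lemma (it explicitly leaves it to the reader), so there is nothing further to compare.
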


We will often use the above lemma (whose proof is left to the reader) to
obtain the higher order estimates. Now we write $\lambda =\lambda _{\ker
}+\lambda ^{\perp }.$ $Q_{0}=(Q_{0})_{\ker }+Q_{0}^{\perp }$ with respect to
$P_{0}.$ Comparing both sides of the following formula:

\begin{equation*}
\frac{\partial \lambda _{\ker }}{\partial t}+\frac{\partial \lambda ^{\perp }%
}{\partial t}=\frac{\partial \lambda }{\partial t}=-(Q_{0}^{\perp
}+2P_{0}\lambda )+r(t).
\end{equation*}%
we obtain%
\begin{equation}
\frac{\partial \lambda ^{\perp }}{\partial t}=-(Q_{0}^{\perp }+2P_{0}\lambda
^{\perp })  \label{11}
\end{equation}%
and
\begin{equation}
\frac{\partial \lambda _{\ker }}{\partial t}=r(t).  \label{11a}
\end{equation}

From now on, $C$ or $C_{j\text{ }}$ denotes a generic constant which may
vary from line to line.

\begin{proposition}
\label{p3.1} Let $(M,J,[\theta _{0}])$ be a closed strictly pseudoconvex CR $%
3$-manifold. Suppose that the CR Paneitz operator $P_{0}$ has the
subelliptic property (\ref{2020AAA}) on the orthogonal complement of $%
\mathrm{ker}{P}_{0}$ and essentially positive. \ Then under the flow (\ref{2}%
), there exists a positive constant $C(k,\Upsilon ,\theta _{0},\beta )>0$
such that%
\begin{equation*}
||\lambda ||_{S^{4k,2}}\leq C(k,\Upsilon ,\theta _{0},\beta )
\end{equation*}%
for all $t\geq 0.$
\end{proposition}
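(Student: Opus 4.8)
The plan is to prove the bound by induction on $k$, using the gradient-flow energy identity as the base case and the subelliptic estimate \eqref{2020AAA} together with energy-type differential inequalities (fed into Lemma \ref{ODE}) for the inductive step. First, for $k=0$: from Lemma \ref{l3.3} we have $\mathcal{E}(\theta)\le\beta^2$ for all $t$, i.e. $\int_M P_0\lambda\cdot\lambda\,d\mu_0+\int_M Q_0\lambda\,d\mu_0\le\beta^2$. Writing $\lambda=\lambda_{\ker}+\lambda^{\perp}$, the first term equals $\int_M P_0\lambda^{\perp}\cdot\lambda^{\perp}\,d\mu_0\ge\Upsilon\|\lambda^{\perp}\|_{L^2}^2$ by essential positivity, while $\int_M Q_0\lambda\,d\mu_0=\int_M Q_0^{\perp}\lambda^{\perp}\,d\mu_0$ (since $(Q_0)_{\ker}=0$ by Theorem \ref{t32}, using $c_1=0$), which is bounded below by $-\tfrac{\Upsilon}{2}\|\lambda^{\perp}\|_{L^2}^2-C(\Upsilon,\theta_0)\|Q_0^{\perp}\|_{L^2}^2$. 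Hence $\|\lambda^{\perp}\|_{L^2}\le C(\Upsilon,\theta_0,\beta)$ for all $t$. For the kernel part, \eqref{11a} gives $\partial_t\lambda_{\ker}=r(t)$; one shows $r(t)$ is controlled using the volume-normalization constraint $\int_M e^{4\lambda}\,d\mu_0=\int_M d\mu_0$ from \eqref{2}, the pseudohermitian Moser inequality (Lemma \ref{l3.2}), and the $L^2$-bound on $\lambda^{\perp}$, so that $\|\lambda_{\ker}\|_{L^2}$ stays bounded; combining, $\|\lambda\|_{L^2}\le C$.

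Next, I would upgrade to $S^{4,2}$. Using \eqref{11}, $\partial_t\lambda^{\perp}=-(Q_0^{\perp}+2P_0\lambda^{\perp})$, so that
\begin{equation*}
\frac{d}{dt}\int_M|P_0\lambda^{\perp}|^2\,d\mu_0
=2\int_M P_0\lambda^{\perp}\cdot P_0\!\left(-(Q_0^{\perp}+2P_0\lambda^{\perp})\right)d\mu_0
=-4\int_M|P_0\lambda^{\perp}|^2\,d\mu_0-2\int_M P_0\lambda^{\perp}\cdot P_0 Q_0^{\perp}\,d\mu_0,
\end{equation*}
where self-adjointness of $P_0$ and $P_0\lambda_{\ker}=0$ were used. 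The last term is absorbed by Cauchy–Schwarz into $2\int_M|P_0\lambda^{\perp}|^2+\tfrac12\int_M|P_0Q_0^{\perp}|^2$, and $P_0Q_0^{\perp}$ is a fixed smooth datum, so Lemma \ref{ODE} yields a time-uniform bound on $\|P_0\lambda^{\perp}\|_{L^2}$. Feeding this and the $L^2$-bound on $\lambda^{\perp}$ into the subelliptic estimate \eqref{2020AAA} with $k=0$ gives $\|\lambda^{\perp}\|_{S^{4,2}}\le C$; together with the kernel part (which is a time-dependent element of the finite-data-controlled $\ker P_0$, whose higher Folland–Stein norms are controlled by its $L^2$-norm since $\ker P_0$ consists of CR-pluriharmonic functions on an embeddable manifold — here one invokes elliptic regularity for the characterizing operator $P_1$, equivalently smoothness of elements of $\ker\square_b$) one gets $\|\lambda\|_{S^{4,2}}\le C$.

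For the inductive step, assume $\|\lambda\|_{S^{4k,2}}\le C_k$ for all $t$. Differentiate $\int_M|P_0^{\,k+1}\lambda^{\perp}|^2\,d\mu_0$ (or, more carefully, $\int_M\lambda^{\perp}\cdot P_0^{\,2k+2}\lambda^{\perp}\,d\mu_0$) along \eqref{11}; as above this produces $-4\int_M|P_0^{\,k+1}\lambda^{\perp}|^2\,d\mu_0$ plus a term linear in the fixed datum $P_0^{\,k+1}Q_0^{\perp}$, handled by Cauchy–Schwarz, plus — and this is the point where care is needed — commutator terms coming from the fact that $\partial_t$ and iterated $P_0$ interact cleanly here (they commute since $P_0$ is the fixed background operator, so in fact there are no commutators between $\partial_t$ and $P_0^{k+1}$), while lower-order contributions from $Q_0^{\perp}$ are controlled by the induction hypothesis. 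Lemma \ref{ODE} then bounds $\|P_0^{\,k+1}\lambda^{\perp}\|_{L^2}$ uniformly in $t$, and iterating the subelliptic estimate \eqref{2020AAA} $(k+1)$ times bounds $\|\lambda^{\perp}\|_{S^{4(k+1),2}}$; combined with the kernel part (again controlled via regularity of CR-pluriharmonic functions and the $L^2$-bound transported by \eqref{11a}), this gives $\|\lambda\|_{S^{4(k+1),2}}\le C_{k+1}$, completing the induction. The main obstacle I expect is not the energy differential inequalities themselves, which are routine once $P_0$ is treated as a fixed self-adjoint operator, but rather the bookkeeping for the kernel component: establishing that $\|\lambda_{\ker}(\cdot,t)\|_{S^{4k,2}}$ stays bounded requires controlling $r(t)$ uniformly via the Moser inequality and the volume constraint, and then using that $\lambda_{\ker}(\cdot,t)$ lies in the (infinite-dimensional!) space $\ker P_0$ of CR-pluriharmonic functions, on which one must still justify that higher Folland–Stein norms are controlled — this is where embeddability of $M$ and the subelliptic theory of $\square_b$ genuinely enter.
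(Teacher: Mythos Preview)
Your treatment of $\lambda^{\perp}$ is essentially the paper's (modulo a small slip: in your displayed identity the term $-4\int_M|P_0\lambda^\perp|^2$ should read $-4\int_M P_0\lambda^\perp\cdot P_0^{2}\lambda^\perp$; one then invokes essential positivity, $\int_M P_0\varphi\cdot P_0^{2}\varphi\,d\mu_0\ge\Upsilon\int_M|P_0\varphi|^2\,d\mu_0$, exactly as in the paper, before applying Lemma~\ref{ODE}). The genuine gap is in your handling of $\lambda_{\ker}$, and it is precisely the obstacle you flag at the end.

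Your proposed mechanism is (i) bound $r(t)$ via Moser and the volume constraint, hence bound $\|\lambda_{\ker}\|_{L^2}$, and then (ii) upgrade to higher norms by asserting that on $\ker P_0$ the $S^{m,2}$ norm is controlled by the $L^2$ norm, citing regularity for $P_1$ or $\square_b$. Step (ii) is false: $\ker P_0$ is infinite-dimensional, and subelliptic regularity gives smoothness of each individual element, not a uniform estimate $\|f\|_{S^{m,2}}\le C_m\|f\|_{L^2}$ on the whole kernel; no such inequality can hold on an infinite-dimensional subspace. Step (i) is also problematic: even if $|r(t)|\le C$, one only gets $|\int_0^t r|\le Ct$, which is unbounded. (As a side remark, you invoke $(Q_0)_{\ker}=0$ via $c_1(T_{1,0}M)=0$, but the proposition does not assume $c_1=0$; the paper uses only that $\int_M Q_0\,d\mu_0=0$, which holds because $Q_0$ is a divergence.)

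The idea you are missing is much simpler and sidesteps the infinite-dimensional kernel entirely. Since $r(t)$ is a \emph{spatial constant}, equation \eqref{11a} integrates to $\lambda_{\ker}(t,p)=\lambda_{\ker}(0,p)+\int_0^t r(s)\,ds$, so $\lambda_{\ker}(t,\cdot)$ differs from the fixed initial datum $\lambda_{\ker}(0,\cdot)$ by a constant function. Hence
\[
\lambda(t,\cdot)-\overline{\lambda(t)}=\bigl(\lambda^{\perp}(t,\cdot)-\overline{\lambda^{\perp}(t)}\bigr)+\bigl(\lambda_{\ker}(0,\cdot)-\overline{\lambda_{\ker}(0)}\bigr),
\]
and the second bracket is \emph{independent of $t$}. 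Together with the $S^{4k,2}$ bound on $\lambda^{\perp}$ already obtained, this gives $\|\lambda-\bar\lambda\|_{S^{4k,2}}\le C$ uniformly in $t$, with no regularity claim on $\ker P_0$ needed. Only then is the single remaining scalar $\bar\lambda(t)$ bounded, and this is where Moser and the volume constraint actually enter: $\int_M e^{4(\lambda-\bar\lambda)}d\mu_0\le C$ together with $\int_M e^{4\lambda}d\mu_0=\int_M d\mu_0$ forces $\bar\lambda\ge -C$, while $4\lambda\le e^{4\lambda}$ gives $\bar\lambda\le C$. This is the paper's route.
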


\proof
From (\ref{11a}), we have
\begin{equation}
\lambda _{\ker }(t,p)-\lambda _{\ker }(0,p)=\displaystyle \int_{0}^{t}rdt.
\label{1.6}
\end{equation}%
Since ($P_{0}$ being self-adjoint)%
\begin{equation*}
\displaystyle \int_{M}P_{0}\lambda ^{\perp }\cdot \lambda _{\ker }d\mu _{0}=0%
\text{ }
\end{equation*}%
and
\begin{equation*}
\begin{array}{l}
\displaystyle \int_{M}Q_{0}\lambda _{\ker }d\mu _{0} \\[8pt]
=\displaystyle \int_{M}Q_{0}[\lambda _{\ker }(0,p)+\displaystyle %
\int_{0}^{t}rdt]d\mu _{0} \\[8pt]
=\displaystyle \int_{M}Q_{0}\lambda _{\ker }(0,p)d\mu _{0} \\
\geq -C.%
\end{array}%
\end{equation*}%
We compute%
\begin{equation*}
\begin{array}{l}
\displaystyle \int_{M}P_{0}\lambda \cdot \lambda d\mu _{0}+\displaystyle %
\int_{M}Q_{0}\lambda d\mu _{0} \\[8pt]
=\displaystyle \int_{M}P_{0}\lambda ^{\perp }\cdot (\lambda _{\ker }+\lambda
^{\perp })d\mu _{0}+\displaystyle \int_{M}Q_{0}(\lambda _{\ker }+\lambda
^{\perp })d\mu _{0} \\[8pt]
\geq \displaystyle \int_{M}P_{0}\lambda ^{\perp }\cdot \lambda ^{\perp }d\mu
_{0}+\displaystyle \int_{M}Q_{0}\lambda ^{\perp }d\mu _{0}-C.%
\end{array}%
\end{equation*}%
It then follows from Lemma \ref{l3.3} that
\begin{equation*}
\displaystyle \int_{M}P_{0}\lambda ^{\perp }\cdot \lambda ^{\perp }d\mu _{0}+%
\displaystyle \int_{M}Q_{0}\lambda ^{\perp }d\mu _{0}\leq (\beta ^{2}+C)
\end{equation*}%
for all $t$ $\geq 0.$ Since the CR Paneitz operator $P_{0}$ is essentially
positive in the sense that
\begin{equation*}
\displaystyle \int_{M}P_{0}\lambda ^{\perp }\cdot \lambda ^{\perp }d\mu
_{0}\geq \Upsilon \displaystyle \int_{M}(\lambda ^{\perp })^{2}d\mu _{0}.
\end{equation*}%
All these imply that there exists a positive constant $C(\Upsilon
,Q_{0},\theta _{0})>0$ such that%
\begin{equation*}
\begin{array}{ccc}
(\beta ^{2}+C) & \geq & \displaystyle \int_{M}P_{0}\lambda ^{\perp }\cdot
\lambda ^{\perp }d\mu _{0}+\displaystyle \int_{M}Q_{0}\lambda ^{\perp }d\mu
_{0} \\[8pt]
& \geq & \displaystyle \frac{\Upsilon }{2}\displaystyle \int_{M}(\lambda
^{\perp })^{2}d\mu _{0}-C(\Upsilon ,Q_{0}^{\perp },\theta _{0})%
\end{array}%
\end{equation*}%
for all $t$ $\geq 0.$ So there exists a positive constant $C(\Upsilon
,Q_{0}^{\perp },\theta _{0},\beta )$ such that
\begin{equation}
\displaystyle \int_{M}(\lambda ^{\perp })^{2}d\mu _{0}\leq C(\Upsilon
,Q_{0}^{\perp },\theta _{0},\beta )  \label{1.2}
\end{equation}%
for all $t$ $\geq 0.$

Next we compute, for all positive integers $k,$
\begin{equation}
\begin{array}{l}
\displaystyle\frac{d}{dt}\displaystyle\int_{M}|P_{0}^{k}\lambda ^{\perp
}|^{2}\,d\mu _{0} \\[8pt]
=2\displaystyle\int_{M}(P_{0}^{k}\lambda ^{\perp })\left( P_{0}^{k}\frac{%
\partial \lambda ^{\perp }}{\partial t}\right) \,d\mu _{0} \\[8pt]
=-2\displaystyle\int_{M}(P_{0}^{k}\lambda ^{\perp })\,P_{0}^{k}(Q_{0}^{\perp
}+2P_{0}\lambda ^{\perp })\,d\mu _{0} \\[8pt]
=-2\displaystyle\int_{M}(P_{0}^{k}\lambda ^{\perp })(P_{0}^{k}Q_{0}^{\perp
})\,d\mu -4\displaystyle\int (P_{0}^{k}\lambda ^{\perp })(P_{0}^{k+1}\lambda
^{\perp })\,d\mu _{0}.%
\end{array}
\label{eqn1.6}
\end{equation}%
By essential positivity of $P_{0},$ we obtain%
\begin{equation}
\displaystyle\int_{M}(P_{0}^{k}\lambda ^{\perp })(P_{0}^{k+1}\lambda ^{\perp
})\,d\mu _{0}=\displaystyle\int_{M}(P_{0}^{k}\lambda ^{\perp
})(P_{0}P_{0}^{k}\lambda ^{\perp })d\mu _{0}\geq \Upsilon \displaystyle%
\int_{M}(P_{0}^{k}\lambda ^{\perp })^{2}d\mu _{0}.  \label{eqn1.7}
\end{equation}%
Therefore there exists a constant $C=C(k,Q_{0}^{\perp },\Upsilon )$ such
that
\begin{equation}
\frac{d}{dt}\displaystyle\int_{M}|P_{0}^{k}\lambda ^{\perp }|^{2}\,d\mu
_{0}\leq -3\Upsilon \displaystyle\int_{M}|P_{0}^{k}\lambda ^{\perp
}|^{2}\,d\mu _{0}+C(k,Q_{0}^{\perp },\Upsilon ).  \label{eqn1.7'}
\end{equation}%
By applying Lemma \ref{ODE} to the O.D.E. $f^{\prime }(t)\leq -3\Upsilon
f(t)+C(k)$ from (\ref{eqn1.7'}), we obtain
\begin{equation*}
\displaystyle\int_{M}|P_{0}^{k}\lambda ^{\perp }|^{2}\,d\mu _{0}\leq
C(k,Q_{0}^{\perp },\Upsilon ).
\end{equation*}%
By the subelliptic property of \ $P_{0}$ and (\ref{1.2}), we obtain
\begin{equation*}
||\lambda ^{\perp }-\overline{\lambda ^{\perp }}||_{S^{4k,2}}\leq
C(k,Q_{0}^{\perp },\Upsilon )
\end{equation*}%
for all $t\geq 0.$

We note that
\begin{equation*}
\lambda -\overline{\lambda }=(\lambda ^{\perp }-\overline{\lambda ^{\perp }}%
)+[\lambda _{\ker }(0,p)-\overline{\lambda _{\ker }(0,p)}],
\end{equation*}%
and hence
\begin{equation}
||\lambda -\overline{\lambda }||_{S^{4k,2}}\leq C(\lambda _{\ker
}(0,p),k)+||\lambda ^{\perp }-\overline{\lambda ^{\perp }}||_{S^{4k,2}}\leq
C(k,Q_{0}^{\perp },\Upsilon )  \label{eqn1.8}
\end{equation}%
for all $t\geq 0.$ Recall that the average $\bar{f}$ of a function $f$ is
defined by $\overline{f}=\frac{\int_{M}fd\mu _{0}}{\int_{M}d\mu _{0}}.$ In
particular, there holds%
\begin{equation*}
||\lambda -\overline{\lambda }||_{S^{4,2}}\leq C(Q_{0}^{\perp },\Upsilon ).
\end{equation*}%
Therefore by the Sobolev embedding theorem, we have $S^{4,2}\subset S^{1,8}$
and%
\begin{equation}
||\lambda -\overline{\lambda }||_{S^{1,8}}\leq C(Q_{0}^{\perp },\Upsilon ).
\label{1.3}
\end{equation}

Now using Lemma \ref{l3.2} (pseudohermitian Moser inequality), we get
\begin{equation*}
\displaystyle \int_{M}e^{4(\lambda -\overline{\lambda })}d\mu _{0}\leq C\exp
(C||\lambda -\overline{\lambda }||_{S^{1,4}})\leq C(Q_{0}^{\perp },\Upsilon
).
\end{equation*}%
Together with $\displaystyle \int_{M}e^{4\lambda }d\mu _{0}$ being invariant
under the flow, we conclude that%
\begin{equation}
C\geq \overline{\lambda }\geq -C(Q_{0}^{\perp },\Upsilon ).  \label{eqn1.9}
\end{equation}%
Here the upper bound is obtained by observing that $\displaystyle \int
\lambda d\mu _{0}\leq \displaystyle \int e^{4\lambda }d\mu _{0}$.

We finally obtain
\begin{equation}
||\lambda ||_{S^{4k,2}}\leq C(Q_{0}^{\perp },\Upsilon )  \label{eqn4.2}
\end{equation}%
for all $t\geq 0.$
\endproof%

\section{\textbf{Long-Time Existence and Asymptotic Convergence}}

In the previous section, we obtain an a priori $S^{4k,2}$ uniformly estimate
(i.e., independent of the time) of the solution of (\ref{2}) on $M\times
\lbrack 0,\infty )$ if the CR Paneitz operator $P_{0}$ has the subelliptic
property on the orthogonal complement of $\mathrm{ker}{P}_{0}$ and
essentially positive. Therefore we are able to prove the long time existence
and asymptotic convergence of solutions of (\ref{2}).

\begin{theorem}
\label{T61} Let $(M,J,[\theta _{0}])$ be a closed embeddable strictly
pseudoconvex CR $3$-manifold. Suppose that the CR Paneitz operator $P_{0}$
is essentially positive and has the subelliptic property on the orthogonal
complement of $\mathrm{\ker }{P}_{0}$. Then the solution of (\ref{2}) exists
on $M\times \lbrack 0,\infty )$ and converges smoothly to $\lambda _{\infty
} $ $\equiv $ $\lambda (\cdot ,\infty )$ as $t$ $\rightarrow $ $\infty .$
Moreover, the contact form $\theta _{\infty }=e^{2\lambda _{\infty }}\theta
_{0}$ has the CR $Q$-curvature with
\begin{equation}
Q_{\infty }=e^{-4\lambda _{\infty }}(Q_{0})_{\ker }.
\end{equation}%
In additional if $(Q_{0})_{\ker }=0,$ then
\begin{equation*}
Q_{\infty }=0.
\end{equation*}
\end{theorem}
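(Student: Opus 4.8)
The plan is to combine the a priori $S^{4k,2}$ estimates from Proposition \ref{p3.1} (which applies since by Corollary \ref{c31} embeddability gives the subelliptic property, and essential positivity is assumed) with a standard parabolic-type continuation argument and a gradient-flow convergence argument. First I would establish \textbf{short-time existence}: the flow (\ref{2}) is, modulo the nonlocal term $r(t)$ and the constant projection $Q_0^\perp$, a fourth-order subelliptic parabolic equation $\partial_t\lambda = -2P_0\lambda + (\text{lower order})$; since $P_0$ has the principal part $\overline\square_b\square_b$ which generates an analytic semigroup in the Folland--Stein scale, the Banach fixed-point / implicit-function argument in $S^{k,2}$ gives a unique smooth solution on some maximal interval $[0,T)$. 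Next, \textbf{long-time existence}: on $[0,T)$ the uniform bounds $\|\lambda\|_{S^{4k,2}}\le C(Q_0^\perp,\Upsilon)$ from Proposition \ref{p3.1} hold for every $k$, and by the Folland--Stein--Sobolev embeddings these give uniform $C^\infty$ bounds on $\lambda(\cdot,t)$, all independent of $t<T$; hence if $T<\infty$ the solution extends past $T$ by short-time existence applied at a time near $T$, a contradiction. Therefore $T=\infty$.

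For the \textbf{asymptotic convergence}, I would use that (\ref{2}) is the volume-normalized negative gradient flow of $\mathcal E$: by Lemma \ref{l3.3},
\begin{equation*}
\frac{d}{dt}\mathcal E(\theta(t)) = -\int_M (Q_0^\perp + 2P_0\lambda)^2\, d\mu_0 \le 0,
\end{equation*}
and by the same lemma $\mathcal E(\theta(t))$ is bounded below (indeed $\mathcal E\le\beta^2$ and one gets a lower bound from essential positivity together with the estimate $\int_M(\lambda^\perp)^2 d\mu_0\le C$ in (\ref{1.2}) and the explicit $\lambda_{\ker}$ dynamics (\ref{11a})). Hence $\mathcal E(\theta(t))$ decreases to a finite limit and
\begin{equation*}
\int_0^\infty \!\!\int_M (Q_0^\perp + 2P_0\lambda)^2\, d\mu_0\, dt < \infty,
\end{equation*}
so there is a sequence $t_j\to\infty$ along which $\|Q_0^\perp + 2P_0\lambda(\cdot,t_j)\|_{L^2}\to 0$. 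Combined with the uniform $C^\infty$ bounds, Arzel\`a--Ascoli extracts a smooth subsequential limit $\lambda_\infty$ with $Q_0^\perp + 2P_0\lambda_\infty = 0$. To upgrade subsequential to full convergence I would invoke a \L{}ojasiewicz--Simon inequality for the real-analytic functional $\mathcal E$ near the critical point $\lambda_\infty$ (the required compactness and analyticity are available from the $S^{4k,2}$ bounds and the fact that $P_0$ is a fixed subelliptic operator with $(\ker P_0)^\perp$ closed), which forces $\lambda(\cdot,t)\to\lambda_\infty$ in every $S^{k,2}$, hence in $C^\infty$.

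Finally, for the curvature identity: from (\ref{0b}), $Q_\infty = e^{-4\lambda_\infty}(Q_0 + 2P_0\lambda_\infty)$, and since $Q_0 = (Q_0)_{\ker}\oplus Q_0^\perp$ with $P_0\lambda_\infty$ lying in $(\ker P_0)^\perp$, the critical point equation $Q_0^\perp + 2P_0\lambda_\infty = 0$ leaves exactly $Q_\infty = e^{-4\lambda_\infty}(Q_0)_{\ker}$; if moreover $(Q_0)_{\ker}=0$ then $Q_\infty = 0$. \textbf{The main obstacle} I anticipate is the convergence step: passing from ``some subsequence converges'' to ``the whole flow converges smoothly'' genuinely requires the \L{}ojasiewicz--Simon machinery (as in Brendle's treatment of the Riemannian $Q$-curvature flow), and one must check carefully that the subelliptic analytic setting — in particular the real-analyticity of $\lambda\mapsto\mathcal E(e^{2\lambda}\theta_0)$ on the Folland--Stein spaces and the Fredholm/gradient structure of $P_0$ restricted to $(\ker P_0)^\perp$ — satisfies the hypotheses of that inequality, the infinite-dimensionality of $\ker P_0$ notwithstanding (it is harmless here because the flow moves $\lambda_{\ker}$ only by the spatially constant $r(t)$, which is itself controlled).
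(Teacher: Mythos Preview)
Your plan is correct and follows the same architecture as the paper: short-time existence, long-time existence via the uniform $S^{4k,2}$ bounds of Proposition~\ref{p3.1}, subsequential convergence from the energy dissipation, and then a \L{}ojasiewicz--Simon argument for full convergence. The one point where the paper is sharper than your outline is precisely your anticipated ``main obstacle'': you propose to invoke the abstract \L{}ojasiewicz--Simon inequality and worry about verifying real-analyticity and Fredholm hypotheses in the subelliptic setting with infinite-dimensional $\ker P_0$. The paper bypasses all of this. Because $\mathcal{E}(\lambda)=\int_M P_0\lambda\cdot\lambda\,d\mu_0+\int_M Q_0\lambda\,d\mu_0$ is \emph{quadratic} in $\lambda$, its gradient $2P_0\lambda+Q_0^{\perp}$ is affine, and the subelliptic estimate together with $2P_0\lambda_\infty^{\perp}+Q_0^{\perp}=0$ gives directly $\|\lambda^{\perp}-\lambda_\infty^{\perp}\|_2\le C\|2P_0\lambda^{\perp}+Q_0^{\perp}\|_2$ and then, by a one-line interpolation along the segment joining $\lambda^{\perp}$ to $\lambda_\infty^{\perp}$, the \L{}ojasiewicz-type bound $|\mathcal{E}(\lambda^{\perp})-\mathcal{E}(\lambda_\infty^{\perp})|\le C\|2P_0\lambda^{\perp}+Q_0^{\perp}\|_2^2$. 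From here Simon's trick of raising to a power $\vartheta\in(0,\tfrac12)$ yields $\int_0^\infty\|\dot\lambda^{\perp}\|_2\,dt<\infty$, which forces $L^2$-convergence of $\lambda^{\perp}$, controls $\int_0^\infty|r(t)|\,dt$ (hence convergence of $\lambda_{\ker}$), and is then bootstrapped to all $S^{4k,2}$ via the differential inequality for $\int_M(P_0^k(\lambda^{\perp}-\lambda_\infty^{\perp}))^2\,d\mu_0$ and Lemma~\ref{ODE}. So no abstract machinery is needed; the quadratic structure of $\mathcal{E}$ and essential positivity of $P_0$ make everything elementary.
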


\proof
Let $(M,J,[\theta _{0}])$ be a closed embeddable strictly pseudoconvex CR $3$%
-manifold. Since $P_{0}$ is nonnegative and has the subelliptic property on $%
(\ker P_{0})^{\perp }$. It follows that there exists a unique $C^{\infty }$
smooth solution $\lambda ^{\perp }$ of (\ref{11}) for a short time. On the
other hand, we can apply the contraction mapping principle to show the short
time existence of a unique $C^{\infty }$ smooth solution $\lambda _{\ker }$
to (\ref{11a}). We refer to the first named author's previous paper (\cite%
{ccc}) for some details. The long time solution then follows from Corollary %
\ref{c31}, the Sobolev embedding theorem for $S^{k,2},$ and the standard
argument for extending the solution at the maximal time $T.$

Starting from (\ref{2d}), we compute%
\begin{eqnarray}
\frac{d^{2}}{dt^{2}}\mathcal{E}(\theta ) &=&-4\displaystyle %
\int_{M}(Q_{0}^{\bot }+2P_{0}\lambda )P_{0}\frac{\partial \lambda }{\partial
t}d\mu _{0}  \label{eqn4.3} \\
&=&4\displaystyle \int_{M}(Q_{0}^{\perp }+2P_{0}\lambda )P_{0}(Q_{0}^{\perp
}+2P_{0}\lambda )d\mu _{0}\text{ }  \notag \\
&\geq &4\Upsilon \displaystyle \int_{M}(Q_{0}^{\perp }+2P_{0}\lambda
)^{2}d\mu _{0}  \notag
\end{eqnarray}%
by essential positivity of $P_{0}$. Therefore $\frac{d}{dt}\mathcal{E}%
(\theta )$ is nondecreasing, and hence%
\begin{equation}
\displaystyle \int_{M}e^{4\lambda }[Q-e^{-4\lambda }(Q_{0})_{\ker }]^{2}d\mu
\text{ (}\geq 0)\text{ is nonincreasing}.  \label{eqn4.4}
\end{equation}%
By (\ref{eqn4.2}), we can find a sequence of times $t_{j}$ such that $%
\lambda _{j}$ $\equiv $ $\lambda (\cdot ,t_{j})$ converges to $\lambda
_{\infty }$ in $C^{\infty }$ topology as $t_{j}$ $\rightarrow \infty $. On
the other hand, integrating (\ref{2d}) gives%
\begin{equation}
\mathcal{E}(\lambda _{\infty })-\mathcal{E}(\lambda _{0})=-\displaystyle %
\int_{0}^{\infty }\displaystyle \int_{M}e^{4\lambda }[Q-e^{-4\lambda
}(Q_{0})_{\ker }]^{2}d\mu dt.  \label{eqn4.5}
\end{equation}%
In view of (\ref{eqn4.4}), (\ref{eqn4.5}), we obtain

\begin{equation}
0=\lim_{t\rightarrow \infty }\displaystyle \int_{M}e^{4\lambda
}[Q-e^{-4\lambda }(Q_{0})_{\ker }]^{2}d\mu =\displaystyle %
\int_{M}e^{4\lambda _{\infty }}[Q_{\infty }-e^{-4\lambda _{\infty
}}(Q_{0})_{\ker }]^{2}d\mu _{\infty }  \label{eqn4.6}
\end{equation}%
where $Q_{\infty }$ denotes the $Q$-curvature with respect to $(J,\theta
_{\infty }),$ $\theta _{\infty }$ $=$ $e^{2\lambda _{\infty }}\theta _{0},$
and $d\mu _{\infty }$ $=$ $e^{4\lambda _{\infty }}d\mu _{0}.$ It follows that%
\begin{equation*}
Q_{\infty }=e^{-4\lambda _{\infty }}(Q_{0})_{\ker }.
\end{equation*}%
That is
\begin{equation*}
2P_{0}\lambda _{\infty }^{\perp }+Q_{0}^{\perp }=0.
\end{equation*}

In the following, we are going to prove the smooth convergence for all time.
First we want to prove that $\lambda $ converges to $\lambda _{\infty }$ in $%
L^{2}.$ Write $\lambda _{\infty }$ $=$ $\lambda _{\infty }^{\perp }+(\lambda
_{\infty })_{\ker }.$ Observe that ($||\cdot ||_{2}$ denotes the $L^{2}$
norm with respect to the volume form $d\mu _{0}$)%
\begin{eqnarray}
||\lambda _{\infty }^{\perp }-\lambda ^{\perp }||_{2} &\leq &||\lambda
_{\infty }^{\perp }-\lambda ^{\perp }||_{S^{4,2}}  \label{eqn4.7} \\
&\leq &C||2P_{0}(\lambda _{\infty }^{\perp }-\lambda ^{\perp })||_{2}  \notag
\\
&=&C||2P_{0}\lambda ^{\perp }+Q_{0}^{\perp }||_{2}  \notag
\end{eqnarray}%
by the subelliptic property of $P_{0}$ on $(\mathrm{ker}{P_{0}})^{\perp }$
and $0$ $=$ $2P_{0}\lambda _{\infty }^{\perp }+Q_{0}^{\perp }$. We compute%
\begin{equation}
\begin{array}{l}
\mid \mathcal{E}(\lambda _{\infty }^{\perp })-\mathcal{E}(\lambda ^{\perp
})\mid \\
=\left\vert \displaystyle\int_{0}^{1}\frac{d}{ds}\mathcal{E(}\lambda ^{\perp
}+s(\lambda _{\infty }^{\perp }-\lambda ^{\perp }))ds\right\vert \\[8pt]
=\left\vert \displaystyle\int_{0}^{1}\displaystyle\int_{M}[2P_{0}(\lambda
^{\perp }+s(\lambda _{\infty }^{\perp }-\lambda ^{\perp }))+Q_{0}^{\perp
}]\cdot (\lambda _{\infty }^{\perp }-\lambda ^{\perp })d\mu _{0}ds\right\vert
\\[8pt]
\leq \displaystyle\int_{0}^{1}||2P_{0}(\lambda ^{\perp }+s(\lambda _{\infty
}^{\perp }-\lambda ^{\perp }))+Q_{0}^{\perp }||_{2}\,||\lambda _{\infty
}^{\perp }-\lambda ^{\perp }||_{2}ds \\
\leq C_{1}||2P_{0}\lambda ^{\perp }+Q_{0}^{\perp }||_{2}^{2}%
\end{array}
\label{eqn4.8}
\end{equation}%
by the Cauchy inequality and (\ref{eqn4.7}). Let $\vartheta $ be a number
between $0$ and $\frac{1}{2}.$ It follows from (\ref{eqn4.8}) that%
\begin{equation}
\mid \mathcal{E}(\lambda _{\infty }^{\perp })-\mathcal{E}(\lambda ^{\perp
})\mid ^{1-\vartheta }\leq C_{2}||2P_{0}\lambda ^{\perp }+Q_{0}^{\perp
}||_{2}^{2(1-\vartheta )}\leq C_{2}||2P_{0}\lambda ^{\perp }+Q_{0}^{\perp
}||_{2}  \label{eqn4.9}
\end{equation}%
for $t$ large by noting that $2(1-\vartheta )$ $>$ $1$ and $||2P_{0}\lambda
^{\perp }+Q_{0}^{\perp }||_{2}$ tends to $0$ as $t\rightarrow \infty $ by (%
\ref{eqn4.6}). Next we compute%
\begin{equation}
\begin{array}{l}
\displaystyle-\frac{d}{dt}(\mathcal{E}(\lambda ^{\perp })-\mathcal{E}%
(\lambda _{\infty }^{\perp }))^{\vartheta } \\
=\displaystyle-\vartheta (\mathcal{E}(\lambda ^{\perp })-\mathcal{E}(\lambda
_{\infty }^{\perp }))^{\vartheta -1}\frac{d}{dt}(\mathcal{E}(\lambda ^{\perp
})-\mathcal{E}(\lambda _{\infty }^{\perp })) \\
=+\vartheta (\mathcal{E}(\lambda ^{\perp })-\mathcal{E}(\lambda _{\infty
}^{\perp }))^{\vartheta -1}||2P_{0}\lambda ^{\perp }+Q_{0}^{\perp }||_{2}||%
\dot{\lambda}^{\perp }||_{2} \\
\geq \vartheta C_{2}^{-1}||\dot{\lambda}^{\perp }||_{2}%
\end{array}
\label{eqn4.10}
\end{equation}%
by (\ref{2d}), (\ref{eqn4.9}), and noting that $\dot{\lambda}^{\perp }$ $=$ $%
-(2P_{0}\lambda ^{\perp }+Q_{0}^{\perp })$ (see (\ref{11})) and hence the
left side is nonnegative. We learned the above trick of raising the power to
$\vartheta $ from \cite{s}. Integrating (\ref{eqn4.10}) with respect to $t$
gives

\begin{equation}
\displaystyle \int_{0}^{\infty }||\dot{\lambda}^{\perp }||_{2}dt<+\infty .
\label{eqn4.11}
\end{equation}

Observing that $\frac{d}{dt}(\lambda ^{\perp }-\lambda _{\infty }^{\perp })$
$=$ $\dot{\lambda}^{\perp }$ and $-\frac{d}{dt}||\lambda ^{\perp }-\lambda
_{\infty }^{\perp }||_{2}^{2}$ $\leq $ $C||\dot{\lambda}^{\perp }||_{2},$ we
can then deduce
\begin{equation}
\lim_{t\rightarrow \infty }||\lambda ^{\perp }-\lambda _{\infty }^{\perp
}||_{2}^{2}=0  \label{eqn4.11'}
\end{equation}%
by (\ref{eqn4.11}). On the other hand, we have estimate $|r(t)|$ $\leq $ $%
C||2P_{0}\lambda ^{\perp }+Q_{0}^{\perp }||_{2}$ $=$ $C||\dot{\lambda}%
^{\perp }||_{2}$. It follows from (\ref{eqn4.11}) that

\begin{equation}
\displaystyle\int_{0}^{\infty }|r(t)|dt<+\infty .  \label{eqn4.12}
\end{equation}%
So in view of (\ref{1.6}) and (\ref{eqn4.12}), $\lambda _{\ker }$ converges
to $(\lambda _{\infty })_{\ker }$ $=$ ($\lambda _{\ker })_{\infty }$ as $t$ $%
\rightarrow $ $\infty $ (not just a sequence of times). Since $\lambda
_{\ker }-(\lambda _{\infty })_{\ker }$ $=$ $-\displaystyle\int_{t}^{\infty
}r(t)dt$ is a function of time only, we also have%
\begin{equation}
\lim_{t\rightarrow \infty }||\lambda _{\ker }-(\lambda _{\infty })_{\ker
}||_{S^{k,2}}=0  \label{eqn4.13}
\end{equation}%
for any nonnegative integer $k.$ With $\lambda ^{\perp }$ replaced by $%
\lambda ^{\perp }-\lambda _{\infty }^{\perp }$ in the argument to deduce (%
\ref{eqn1.7'}), we obtain%
\begin{equation}
\frac{d}{dt}\displaystyle\int_{M}(P_{0}^{k}(\lambda ^{\perp }-\lambda
_{\infty }^{\perp }))^{2}d\mu _{0}\leq -4\Upsilon \displaystyle%
\int_{M}(P_{0}^{k}(\lambda ^{\perp }-\lambda _{\infty }^{\perp }))^{2}d\mu
_{0}+C(k)||\lambda ^{\perp }-\lambda _{\infty }^{\perp }||_{2}.
\label{eqn4.14}
\end{equation}%
By Lemma \ref{ODE} and (\ref{eqn4.11'}), we get\
\begin{equation}
\lim_{t\rightarrow \infty }\displaystyle\int_{M}(P_{0}^{k}(\lambda ^{\perp
}-\lambda _{\infty }^{\perp }))^{2}d\mu _{0}=0.  \label{eqn4.15}
\end{equation}%
Hence by the subelliptic property of $P_{0}^{k},$ we conclude from (\ref%
{eqn4.15}) and (\ref{eqn4.11'}) that%
\begin{equation*}
\lim_{t\rightarrow \infty }||\lambda ^{\perp }-\lambda _{\infty }^{\perp
}||_{S^{4k,2}}=0.
\end{equation*}
Together with (\ref{eqn4.13}), we have proved that $\lambda $ converges to $%
\lambda _{\infty }$ smoothly as $t$ $\rightarrow $ $\infty .$%
\endproof%
\bigskip

\textbf{Proof of Main Theorem \ref{t1} : \ It follows easily from Theorem %
\ref{T61} and Theorem \ref{t32}.}

\bigskip

\bigskip

\end{document}